\newtheorem{Thm}{Theorem}
\newtheorem{Lem}[Thm]{Lemma}
\newtheorem{Prop}[Thm]{Proposition}
\newtheorem{Cor}[Thm]{Corollary}
\theoremstyle{definition}
\newtheorem{Def}[Thm]{Definition}
\theoremstyle{remark}
\newtheorem{Rem}[Thm]{Remark}
\numberwithin{equation}{section}
\def\CC{{\mathbb{C}}}
\def\QQ{{\mathbb{Q}}}
\def\ZZ{{\mathbb{Z}}}
\def\NN{{\mathbb{N}}}
\def\PP{{\mathbb{P}}}
\def\PP{{\mathbb{P}}}
\def\bigOsoft{\tilde{\mathcal{O}}}
\def\bigO{\mathcal{O}}
\def\bigOsoft{\tilde{\mathcal{O}}}
\def\dv{{\textrm{div}}}
\title[Computation of Darboux polynomials and rational first integrals]{Computation of Darboux polynomials and rational first integrals with bounded degree\\ in polynomial time}
\date{\today}
\author[G.~Ch\`eze]{Guillaume Ch\`eze}
\address{Institut de Math\'ematiques de Toulouse\\
Universit\'e Paul Sabatier Toulouse 3 \\
MIP B\^at 1R3\\
31 062 TOULOUSE cedex 9, France}
\email{guillaume.cheze@math.univ-toulouse.fr}
\begin{document}
	
\begin{abstract}
In this paper we study planar polynomial differential systems of this form:
$$\dfrac{dX}{dt}=\dot{X}=A(X,Y), \,\, \dfrac{dY}{dt}=\dot{Y}=B(X,Y),$$
where $A,B \in \ZZ[X,Y]$ and $\deg A \leq d$,  $\deg B \leq d$, $\| A \|_{\infty} \leq \mathcal{H}$ and  $\| B \|_{\infty} \leq \mathcal{H}$.
A lot of properties of  planar polynomial differential systems are related to irreducible Darboux polynomials of the corresponding derivation: $D=A(X,Y)\partial_X + B(X,Y) \partial_Y$. Darboux polynomials are usually computed with the method of undetermined coefficients. With this method we have to solve a polynomial  system. 
We show that this approach can give rise to the computation of an exponential number of reducible Darboux polynomials. Here we show that the Lagutinskii-Pereira's algorithm  computes irreducible Darboux polynomials with degree smaller than $N$, with a polynomial number, relatively to $d$,  $\log(\mathcal{H})$ and $N$, binary operations. We also give a polynomial-time method to compute, if it exists, a rational first integral with bounded degree.
\end{abstract}	
	
\maketitle


 \bibliographystyle{alpha}
\bibliography{biblio_darboux}

\begin{thebibliography}{00}



\bibitem[AHS03]{AHS}
S.~Abhyankar, W.~Heinzer, and A.~Sathaye.
\newblock Translates of polynomials.
\newblock In {\em A tribute to C. S. Seshadri (Chennai, 2002)}, Trends Math.,
  pages 51--124. Birkh\"auser, Basel, 2003.

\bibitem[AKS07]{Sombra_Krick_Aven}
M.~Avenda{\~n}o, T.~Krick, and M.~Sombra.
\newblock Factoring bivariate sparse (lacunary) polynomials.
\newblock {\em J. Complexity}, 23:193--216, 2007.

\bibitem[Aut91]{Autonne}
L.~Autonne.
\newblock Sur la th{\'e}orie des {\'e}quations diff{\'e}rentielles du premier
  ordre et du premier degr{\'e}.
\newblock {\em Journal de l'{\'E}cole Polytechnique}, 61:35--122, 1891.

\bibitem[BC08]{Bus_Che}
L.~Bus\'e and G.~Ch\`eze.
\newblock On the total order of reducibility of a pencil of algebraic plane
  curves.
\newblock Preprint, 2008.

\bibitem[BCS97]{Burgetal}
P.~B\"{u}rgisser, M.~Clausen, and M.~Shokrollahi.
\newblock {\em Algebraic Complexity Theory}, volume 315 of {\em (Grundlehren
  der mathematischen Wissenschaften)}.
\newblock Springer, 1997.

\bibitem[Ber70]{Berlekamp}
E.~R. Berlekamp.
\newblock Factoring polynomials over large finite fields.
\newblock {\em Math. Comp.}, 24:713--735, 1970.

\bibitem[BLS{\etalchar{+}}04]{BLSSW}
A.~Bostan, G.~Lecerf, B.~Salvy, \'E. Schost, and B.~Wiebelt.
\newblock Complexity {I}ssues in {B}ivariate {P}olynomial {F}actorization.
\newblock In {\em Proceedings of ISSAC 2004}, pages 42--49. ACM, 2004.

\bibitem[Bod08]{Bodin}
A.~Bodin.
\newblock Reducibility of rational functions in several variables.
\newblock {\em Israel J. Math.}, 164:333--347, 2008.

\bibitem[BP94]{BiPa1994}
D.~Bini and V.~Pan.
\newblock {\em Polynomial and matrix computations. {V}ol. 1}.
\newblock Progress in Theoretical Computer Science. Birkh\"auser Boston Inc.,
  Boston, MA, 1994.
\newblock Fundamental algorithms.

\bibitem[BvHKS09]{Belabasetal}
K.~Belabas, M.~van Hoeij, J.~Kl{\"u}ners, and A.~Steel.
\newblock Factoring polynomials over global fields.
\newblock {\em J. Th. Nombres Bordeaux}, 21:15--39, 2009.

\bibitem[CG03]{Chavarriga_Grau}
J.~Chavarriga and M.~Grau.
\newblock Some open problems related to 16b {H}ilbert problem.
\newblock {\em Sci. Ser. A Math. Sci. (N.S.)}, 9:1--26, 2003.

\bibitem[CGGL03]{chavarriga_giac_gine_llibre}
J.~Chavarriga, H.~Giacomini, J.~Gin{\'e}, and J.~Llibre.
\newblock Darboux integrability and the inverse integrating factor.
\newblock {\em J. Differential Equations}, 194(1):116--139, 2003.

\bibitem[Chr94]{Christopher1994}
C.~Christopher.
\newblock Invariant algebraic curves and conditions for a centre.
\newblock {\em Proc. Roy. Soc. Edinburgh Sect. A}, 124(6):1209--1229, 1994.

\bibitem[Chr99]{Christopher}
C.~Christopher.
\newblock Liouvillian first integrals of second order polynomial differential
  equations.
\newblock {\em Electron. J. Differential Equations}, pages No. 49, 7 pp.
  (electronic), 1999.

\bibitem[CL07]{CL}
G.~Ch{\`e}ze and G.~Lecerf.
\newblock Lifting and recombination techniques for absolute factorization.
\newblock {\em J. Complexity}, 23(3):380--420, 2007.

\bibitem[CLP07]{Christopher_Llibre_Pereira}
C.~Christopher, J.~Llibre, and J.~Vit{\'o}rio Pereira.
\newblock Multiplicity of invariant algebraic curves in polynomial vector
  fields.
\newblock {\em Pacific J. Math.}, 229(1):63--117, 2007.

\bibitem[CMS06]{Coutinho_Schechter}
S.~C. Coutinho and L.~Menasch{\'e}~Schechter.
\newblock Algebraic solutions of holomorphic foliations: an algorithmic
  approach.
\newblock {\em J. Symbolic Comput.}, 41(5):603--618, 2006.

\bibitem[CMS09]{Coutinho}
S.~C. Coutinho and L.~Menasch{\'e}~Schechter.
\newblock Algebraic solutions of plane vector fields.
\newblock {\em J. Pure Appl. Algebra}, 213(1):144--153, 2009.

\bibitem[Dar78]{Darboux}
G.~Darboux.
\newblock Memoire sur les {\'e}quations diff'{\'e}rentielles du premier ordre
  et du premier degr{\'e}.
\newblock {\em Bull. Sci. Math.}, 32:60--96, 123--144, 151--200, 1878.

\bibitem[DDdM02a]{Duarte2}
L.~G.~S. Duarte, S.~E.~S. Duarte, and L.~A. C.~P. da~Mota.
\newblock Analysing the structure of the integrating factors for first-order
  ordinary differential equations with {L}iouvillian functions in the solution.
\newblock {\em J. Phys. A}, 35(4):1001--1006, 2002.

\bibitem[DDdM02b]{Duarte3}
L.~G.~S. Duarte, S.~E.~S. Duarte, and L.~A. C.~P. da~Mota.
\newblock A method to tackle first-order ordinary differential equations with
  {L}iouvillian functions in the solution.
\newblock {\em J. Phys. A}, 35(17):3899--3910, 2002.

\bibitem[DDdMS02]{Duarte1}
L.~G.~S. Duarte, S.~E.~S. Duarte, L.~A. C.~P. da~Mota, and J.~E.~F. Skea.
\newblock An extension of the {P}relle-{S}inger method and a {M}aple
  implementation.
\newblock {\em Comput. Phys. Comm.}, 144(1):46--62, 2002.

\bibitem[DLA06]{Dumortier_Llibre_Artes}
F.~Dumortier, J.~Llibre, and J.~C.~Art{\'e}s.
\newblock {\em Qualitative theory of planar differential systems}.
\newblock Universitext. Springer-Verlag, Berlin, 2006.

\bibitem[DLS98]{Lagutinskii}
V.~A.~Dobrovol{'}skii, N.V.~Lokot{'}, and
  J.-M.~Strelcyn.
\newblock Mikhail {N}ikolaevich {L}agutinskii (1871--1915): un
  math{\'e}maticien m{\'e}connu.
\newblock {\em Historia Math.}, 25(3):245--264, 1998.

\bibitem[DT89]{DT89}
R.~Dvornicich and C.~Traverso.
\newblock Newton symmetric functions and the arithmetic of algebraically closed
  fields.
\newblock In {\em Applied algebra, algebraic algorithms and error-correcting
  codes ({M}enorca, 1987)}, volume 356 of {\em Lecture Notes in Comput. Sci.},
  pages 216--224. Springer, Berlin, 1989.

\bibitem[FG10]{Ferragut_Giac}
A.~Ferragut and H.~Giacomini.
\newblock A new algorithm for finding rational first integrals of polynomial
  vector fields.
\newblock to appear in Qualitative Theory of Dynamical Systems, 2010.

\bibitem[Gao03]{Gao}
S.~Gao.
\newblock Factoring multivariate polynomials via partial differential
  equations.
\newblock {\em Math. Comp.}, 72(242):801--822 (electronic), 2003.

\bibitem[Gin07]{Gine_survey}
J.~Gin{\'e}.
\newblock On some open problems in planar differential systems and {H}ilbert's
  16th problem.
\newblock {\em Chaos Solitons Fractals}, 31(5):1118--1134, 2007.

\bibitem[GLV96]{Giac_Llibre_Viano}
H.~Giacomini, J.~Llibre, and M.~Viano.
\newblock On the nonexistence, existence and uniqueness of limit cycles.
\newblock {\em Nonlinearity}, 9(2):501--516, 1996.

\bibitem[Gor01]{Goriely}
A.~Goriely.
\newblock {\em Integrability and nonintegrability of dynamical systems},
  volume~19 of {\em Advanced Series in Nonlinear Dynamics}.
\newblock World Scientific Publishing Co. Inc., River Edge, NJ, 2001.

\bibitem[Hew91]{Hewitt}
C.~G. Hewitt.
\newblock Algebraic invariant curves in cosmological dynamical systems and
  exact solutions.
\newblock {\em Gen. Relativity Gravitation}, 23(12):1363--1383, 1991.

\bibitem[HS75]{Horowitz}
E.~Horowitz and S.~Sahni.
\newblock On computing the exact determinant of matrices with polynomial
  entries.
\newblock {\em J. ACM}, 22(1):38--50, 1975.

\bibitem[Jou79]{Joua_Pfaff}
J.-P.~Jouanolou.
\newblock {\em \'{E}quations de {P}faff alg\'ebriques}, volume 708 of {\em
  Lecture Notes in Mathematics}.
\newblock Springer, Berlin, 1979.

\bibitem[Kal85a]{Kal85}
E.~Kaltofen.
\newblock Fast parallel absolute irreducibility testing.
\newblock {\em J. Symbolic Comput.}, 1(1):57--67, 1985.

\bibitem[Kal85b]{Kaltof1}
E.~Kaltofen.
\newblock Polynomial-time reductions from multivariate to bi- and univariate
  integral polynomial factorization.
\newblock {\em SIAM J. Comput.}, 14(2):469--489, 1985.

\bibitem[Kal90]{K2}
E.~Kaltofen.
\newblock Polynomial factorization 1982--1986.
\newblock In {\em Computers in mathematics (Stanford, CA, 1986)}, volume 125 of
  {\em Lecture Notes in Pure and Appl. Math.}, pages 285--309. Dekker, New
  York, 1990.

\bibitem[Kal92]{Kaltofsurvey}
E.~Kaltofen.
\newblock Polynomial factorization 1987-1991.
\newblock In I.~Simon, editor, {\em Proc. LATIN '92}, volume 583, pages
  294--313. Springer-Verlag, 1992.

\bibitem[KLL88]{KLL}
R.~Kannan, A.~K. Lenstra, and L.~Lov{\'a}sz.
\newblock Polynomial factorization and nonrandomness of bits of algebraic and
  some transcendental numbers.
\newblock {\em Math. Comp.}, 50(181):235--250, 1988.

\bibitem[Lec06]{Lec2006}
G.~Lecerf.
\newblock Sharp precision in {H}ensel lifting for bivariate polynomial
  factorization.
\newblock {\em Math. Comp.}, 75(254):921--933 (electronic), 2006.

\bibitem[Len84]{LLL_multi}
A.~K. Lenstra.
\newblock Factoring multivariate integral polynomials.
\newblock {\em Th. Comp. Science}, 34:207--213, 1984.

\bibitem[LLL82]{LLL}
A.~K. Lenstra, H.~W. Lenstra, Jr., and L.~Lov{\'a}sz.
\newblock Factoring polynomials with rational coefficients.
\newblock {\em Math. Ann.}, 261(4):515--534, 1982.

\bibitem[Lor93]{Lo}
D.~Lorenzini.
\newblock Reducibility of polynomials in two variables.
\newblock {\em J. Algebra}, 156(1):65--75, 1993.

\bibitem[LV08]{LlibreValls}
J.~Llibre and C.~ Valls.
\newblock Darboux integrability and algebraic invariant surfaces for the
  {R}ikitake system.
\newblock {\em J. Math. Phys.}, 49(3):032702, 17, 2008.

\bibitem[LZ00]{LlibreZhang_Rikitake}
J.~Llibre and X.~Zhang.
\newblock Invariant algebraic surfaces of the {R}ikitake system.
\newblock {\em J. Phys. A}, 33(42):7613--7635, 2000.

\bibitem[Man93]{Man}
Y.-K.~Man.
\newblock Computing closed form solutions of first order {ODE}s using the
  {P}relle-{S}inger procedure.
\newblock {\em J. Symbolic Comput.}, 16(5):423--443, 1993.

\bibitem[MM97]{ManMac}
Y.-K~Man and M.~A.~H.~MacCallum.
\newblock A rational approach to the {P}relle-{S}inger algorithm.
\newblock {\em J. Symbolic Comput.}, 24(1):31--43, 1997.

\bibitem[MO04]{Ollagnier}
J.~Moulin~Ollagnier.
\newblock Algebraic closure of a rational function.
\newblock {\em Qual. Theory Dyn. Syst.}, 5(2):285--300, 2004.

\bibitem[Pai91]{Painleve}
P.~Painlev{\'e}.
\newblock M{\'e}moire sur les {\'e}quations diff{\'e}rentielles du premier
  ordre.
\newblock {\em Annales Scientifiques de l'{\'E}cole Normale Sup{\'e}rieure},
  8:9--59, 103--140, 201--226, 276--284, 1891.

\bibitem[Per01]{Pereira2001}
J.V.~Pereira.
\newblock Vector fields, invariant varieties and linear systems.
\newblock {\em Ann. Inst. Fourier (Grenoble)}, 51(5):1385--1405, 2001.

\bibitem[Poi91]{Poincare}
H.~Poincar{\'e}.
\newblock Sur l'int{\'e}gration alg{\'e}brique des {\'e}quations
  diff{\'e}rentielles du premier ordre et du premier degr{\'e}.
\newblock {\em Rend. Circ. Mat. Palermo}, 5:161--191, 1891.

\bibitem[PS83]{PrelleSinger}
M.~J. Prelle and M.~F. Singer.
\newblock Elementary first integrals of differential equations.
\newblock {\em Trans. Amer. Math. Soc.}, 279(1):215--229, 1983.

\bibitem[Rup86]{Ruppert}
W.M.~Ruppert.
\newblock Reduzibilit\"at {E}bener {K}urven.
\newblock {\em J. Reine Angew. Math.}, 369:167--191, 1986.

\bibitem[Rup99]{Ruppert2}
W.M.~Ruppert.
\newblock Reducibility of polynomials {$f(x,y)$} modulo {$p$}.
\newblock {\em J. Number Theory}, 77(1):62--70, 1999.

\bibitem[Sch84]{Schonhage}
A.~Sch{\"o}nhage.
\newblock Factorization of univariate integer polynomials by {D}iophantine
  approximation and an improved basis reduction algorithm.
\newblock In {\em Automata, languages and programming ({A}ntwerp, 1984)},
  volume 172 of {\em Lecture Notes in Comput. Sci.}, pages 436--447. Springer,
  Berlin, 1984.

\bibitem[Sch00]{Schinzel}
A.~Schinzel.
\newblock {\em Polynomials with special regard to reducibility}, volume~77 of
  {\em Encyclopedia of Mathematics and its Applications}.
\newblock Cambridge University Press, Cambridge, 2000.
\newblock With an appendix by Umberto Zannier.

\bibitem[Sch07]{Scheib}
P.~Scheiblechner.
\newblock On the complexity of deciding connectedness and computing {B}etti
  numbers of a complex algebraic variety.
\newblock {\em J. Complexity}, 23(3):359--379, 2007.

\bibitem[Sin92]{Singer}
M.F.~Singer.
\newblock Liouvillian first integrals of differential equations.
\newblock {\em Trans. Amer. Math. Soc.}, 333(2):673--688, 1992.

\bibitem[Tra85]{Trag}
B.~Trager.
\newblock {\em On the integration of algebraic functions}.
\newblock PhD thesis, M.I.T., 1985.

\bibitem[Val05]{Valls}
C.~Valls.
\newblock Rikitake system: analytic and {D}arbouxian integrals.
\newblock {\em Proc. Roy. Soc. Edinburgh Sect. A}, 135(6):1309--1326, 2005.

\bibitem[Vis93]{Vi}
A.~Vistoli.
\newblock The number of reducible hypersurfaces in a pencil.
\newblock {\em Invent. Math.}, 112(2):247--262, 1993.

\bibitem[vzGG03]{GG}
J.~von~zur Gathen and J.~Gerhard.
\newblock {\em Modern computer algebra}.
\newblock Cambridge University Press, Cambridge, second edition, 2003.

\bibitem[Wei]{Weim}
M.~Weimann.
\newblock A lifting and recombination algorithm for rational factorization of
  sparse polynomials.
\newblock {\em J. Complexity}, to appear.

\bibitem[Yap00]{Yap}
C.K.~Yap.
\newblock {\em Fundamental problems of algorithmic algebra}.
\newblock Oxford University Press, Inc., New York, NY, USA, 2000.


\end{thebibliography}
\section*{Introduction}
In this paper we study the following planar polynomial differential system:
$$\dfrac{dX}{dt}=\dot{X}=A(X,Y), \,\, \dfrac{dY}{dt}=\dot{Y}=B(X,Y),$$
where $A,B \in \ZZ[X,Y]$ and $\deg A \leq d$,  $\deg B \leq d$, $\| A \|_{\infty} \leq \mathcal{H}$ and  $\| B \|_{\infty} \leq \mathcal{H}$.  We associate  to this polynomial differential system the polynomial derivation $D=A(X,Y)\partial_X + B(X,Y)\partial_Y$. \\

A polynomial $f$ is said to be a \emph{Darboux polynomial},  if $D(f)=g.f$, where $g$ is a polynomial. A lot of properties of a polynomial differential system are related to \emph{irreducible} Darboux polynomials of the corresponding derivation $D$. Usually Darboux polynomials are computed with the method of undetermined coefficients. In other words, if we suppose that $\deg f \leq N$ then $D(f)=g.f$ gives a polynomial system in the unknown coefficients of $g$ and $f$. Then we can find $f$ and $g$ if we solve this system.  We will see that this strategy can give rise to the computation of an exponential number of reducible Darboux polynomials. \\
 In this paper we show that we can compute all the irreducible Darboux polynomials of degree smaller than $N$ with $\bigO\Big(\big(dN\log(\mathcal{H})\big)^{\bigO(1)}\Big)$ binary operations. Our strategy relies on the  factorization of the ecstatic curve as suggested by J.V.~Pereira in \cite{Pereira2001}.\\
This complexity result implies that, if we use the Prelle-Singer's strategy \cite{PrelleSinger}, then we can compute an integrating factor in polynomial-time. With this integrating factor we can then deduce an elementary  solution of the given differential equation. We also show that we can decide if $D$ has a rational first integral of degree $N$. Furthermore, we can  compute this rational first integral with $\bigO\Big( \big( dN \log (\mathcal{H})\big)^{\bigO(1)} \Big)$ binary operations.

\subsection*{Related results}
The computation of a first integral of a polynomial differential system is an old and classical problem. The situation is the following: we want to compute a function $\mathcal{F}$ such that the curves $\mathcal{F}(X,Y)=c$, where $c$ are constants, give  orbits of the differential system. Thus we want to find  a function $\mathcal{F}$ such that $D(\mathcal{F})=0$.\\
In 1878, G.~Darboux \cite{Darboux} gives a strategy to find first integrals. One of the tool developed by G.~Darboux is now called \emph{Darboux polynomials}.  There exist a lot of different names in the literature for Darboux polynomials, for example we can find: special integrals, eigenpolynomials, algebraic invariant curves, particular algebraic solutions or special polynomials.\\

Now recall briefly the history and the use of Darboux polynomials.\\
 G.~Darboux shows in \cite{Darboux} that if we have enough Darboux polynomials then there exists a rational first integral for $D$, i.e. $D(\mathcal{F})=0$ and $\mathcal{F} \in \CC(X,Y)$. More precisely, G.~Darboux shows that if we have $d(d+1)/2+2$ Darboux polynomials then the derivation has a rational  first integral which can be expressed by means of these polynomials. This work is improved by H.~Poincar\'e \cite{Poincare}, P.~Painlev\'e \cite{Painleve}, and L.~Autonne \cite{Autonne} at the end of the  XIX century.

Since 1979, a lot of new results appear. In his book \cite{Joua_Pfaff}, J.-P.~Jouanolou gives  a polynomial derivation with no Darboux polynomials. Then there exist polynomial differential systems  with no nontrivial rational first integral.

In \cite{PrelleSinger} M.~Prelle and M.~Singer give a structure theorem for polynomial differential systems with an elementary first integral. Roughly speaking an elementary function is a function which can be written in terms  of polynomials, logarithms, exponentials and algebraic extensions.  With this structure theorem the authors show how we can compute an elementary first integral if we know all the Darboux polynomials of $D$.

In \cite{Man} and \cite{ManMac} Y.-K Man  and M. MacCallum explain  how we can use  Prelle-Singer's method in practice. The bottleneck of this method is the computation of  Darboux polynomials. Indeed, the method of undetermined coefficients is used. With this method we have to solve a polynomial system, and  Y.-K Man in \cite{Man} explains how to solve this system with a Groebner basis. The resolution of this polynomial system is difficult because no particular structure of this system is known. Furthermore, this polynomial system can give an exponential number of reducible Darboux polynomials.  

In \cite{Singer} M.~Singer shows a structure theorem for polynomial differential systems with a Liouvillian first integral. Roughly speaking a Liouvillian function is a function which can be obtained ``by quadratures'' of elementary functions. C.~Christopher in \cite{Christopher} improves this structure theorem, and then  he suggests an algorithm to find Liouvillian first integrals.

In a series of papers \cite{Duarte1,Duarte2,Duarte3} the authors describe an algorithm and its implementation for the computation of Liouvillian first integrals. The strategy is in the same spirit as the one proposed by C.~Christopher.

Singer's theorem, Christopher's theorem, and Duarte-Duarte-da Mota's algorithm are based on Darboux polynomials.

In \cite{Christopher_Llibre_Pereira}  the authors define an algebraic, a geometric, an  integral, an infinitesimal and an holonomic  notion of multiplicity for an irreducible Darboux polynomial. They show that these notions are related. Furthermore, in order to define the algebraic multiplicity they introduce the ecstatic curve and then use some of its properties. We recall the definition and some properties of the ecstatic curve in Section \ref{Sect_ecstatic}.  It seems that this curve and some of its properties was already known by M.N. Lagutinskii (1871--1915), see \cite{Lagutinskii}, and was rediscovered by J.V. Pereira, see  \cite{Pereira2001}. Theorem 2 in \cite{Pereira2001} is the main tool of our algorithms.

In \cite{Coutinho,Coutinho_Schechter}, the authors give a Las Vegas strategy to decide if there exist Darboux polynomials for a given derivation.

Recently, in \cite{Ferragut_Giac} the authors propose an algorithm to compute a rational first integral without computing Darboux polynomials. Unfortunately, there are no complexity study for this algorithm.\\

Darboux polynomials are also used in the qualitative study of polynomial system. For example, the inverse integrating factor is a special Darboux polynomial, and, algebraic limit cycles are factors of the inverse integrating factor, see e.g. \cite{Giac_Llibre_Viano}.

We also mention that Darboux polynomials are used in Physics: In \cite{Hewitt} the author uses Darboux polynomials in order to find exact solutions to the Einstein field. In \cite{LlibreZhang_Rikitake,Valls,LlibreValls} Darboux polynomials are also used to study the Rikitake system, which is a simple model for the earth's magneto-hydrodynamic.\\

 In \cite{Chavarriga_Grau,Gine_survey} the reader can find some open questions and some relations between the computation of Darboux polynomials and Hilbert's 16th problem.\\

For other results, references or applications of Darboux polynomials the reader can consult for example \cite{Goriely,Dumortier_Llibre_Artes}.

\subsection*{Main results}
In this paper we show:
\begin{Thm}\label{Thm1}
Let $D=A(X,Y)\partial_X+ B(X,Y)\partial_Y$ be a polynomial derivation such that $A(X,Y), B(X,Y) \in \ZZ[X,Y]$, $\deg A \leq d$, $\deg B \leq d$, $\|A\|_{\infty} \leq \mathcal{H}$, $\|B\|_{\infty} \leq \mathcal{H}$ and $A$, $B$ are coprime.
\begin{enumerate}
\item We can decide if there exists a finite number of irreducible Darboux polynomials with degree smaller than $N$ in a deterministic way with \\ 
\mbox{$\bigO\Big( \big(dN\log(\mathcal{H})\big)^{\bigO(1)}\Big)$} binary operations.
\item If there exists a finite number of irreducible Darboux polynomials with degree smaller than $N$ then we can compute  all of them in a deterministic way with $\bigO\Big( \big(dN\log(\mathcal{H})\big)^{\bigO(1)}\Big)$ binary operations.
\end{enumerate}
\end{Thm}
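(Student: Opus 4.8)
The plan is to reduce the computation of irreducible Darboux polynomials of bounded degree to a factorization problem on a single auxiliary curve, namely the ecstatic curve, as suggested by Pereira's Theorem~2 quoted in the excerpt. Recall that for a derivation $D$ and an integer $N$, one considers the vector space of polynomials of degree at most $N$, with a basis $v_1,\dots,v_\ell$ where $\ell=\binom{N+2}{2}$, and forms the ecstatic curve as the Wronskian-type determinant $\det\bigl(D^{i}(v_j)\bigr)_{0\le i\le \ell-1,\,1\le j\le \ell}$. The key structural fact is that every irreducible Darboux polynomial of degree at most $N$ divides this ecstatic determinant. So the entire plan rests on computing this determinant explicitly and then factoring it.

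First I would bound the size of the ecstatic determinant. Each entry $D^i(v_j)$ is a polynomial obtained by applying the first-order operator $D$ a total of $i$ times; since $D$ raises degree by at most $d-1$ and the number of iterations is at most $\ell-1=\bigO(N^2)$, the entries have degree $\bigO(dN^2)$ and coefficients whose bit-size grows polynomially in $d$, $N$, and $\log(\mathcal{H})$. I would track these heights carefully through the iterated application of $D$ (each application multiplies coefficients and sums, so heights grow additively in $\log$-scale by a term controlled by $\log(d\mathcal{H})$ per step). The determinant of this $\ell\times\ell$ matrix then has degree $\bigO(dN^3)$ and bit-size still polynomial in $d$, $N$, $\log(\mathcal{H})$; computing it via standard fraction-free Gaussian elimination or Bareiss-type algorithms over $\ZZ[X,Y]$ costs a polynomial number of binary operations in these parameters. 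This establishes that the ecstatic curve itself is computable in the claimed complexity.

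Next I would factor the ecstatic polynomial over $\QQ$ (equivalently over $\ZZ$) using a polynomial-time bivariate factorization algorithm; such algorithms run in time polynomial in the degree and bit-size of the input, so the cost remains within the target bound. Among the irreducible factors, I would then isolate the candidates for Darboux polynomials: each irreducible factor of degree at most $N$ is tested for the defining property $D(f)=g\,f$, which amounts to checking that the polynomial $D(f)$ is divisible by $f$, a single polynomial division costing polynomially many operations. The factors passing this test are exactly the irreducible Darboux polynomials of degree at most $N$, since on the one hand every such Darboux polynomial divides the ecstatic determinant (by Pereira's theorem) and hence appears among its irreducible factors, and on the other hand the divisibility test is a direct verification of the definition. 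For part~(1), finiteness of the set of irreducible Darboux polynomials of degree at most $N$ is equivalent, by the same circle of ideas, to the ecstatic polynomial not vanishing identically (a nonzero ecstatic polynomial has finitely many irreducible factors, hence only finitely many Darboux polynomials can divide it); so deciding finiteness reduces to testing whether the determinant is the zero polynomial, which is read off directly from the computation above.

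The main obstacle I anticipate is the height analysis: controlling the growth of coefficient bit-sizes through $\bigO(N^2)$ iterations of $D$ and then through an $\ell\times\ell$ determinant, and confirming that the final bound stays polynomial rather than exponential in $N$. The degree growth is easy to track, but naive bounds on the heights of iterated derivatives and of large determinants can blow up; the delicate point is to invoke sharp height estimates (of Hadamard/Mignotte type for the determinant, and additive-in-$\log$ estimates for the iterated application of $D$) to keep everything polynomial. A secondary subtlety is ensuring that the factorization step correctly handles repeated factors and that the degree-$\le N$ filtering plus the divisibility test together recover \emph{all and only} the irreducible Darboux polynomials; this follows from Pereira's theorem but requires care in the edge case where the ecstatic polynomial vanishes identically, which is precisely the situation corresponding to infinitely many Darboux polynomials and is handled separately in part~(1).
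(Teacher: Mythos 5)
Your overall plan is exactly the paper's: compute the ecstatic determinant, bound its degree and height to show polynomial bit-size, test whether it vanishes identically to decide finiteness, and otherwise factor it and keep the irreducible factors $f$ of degree at most $N$ satisfying $f \mid D(f)$. However, there is one genuine gap: you factor the ecstatic polynomial \emph{over $\QQ$}, whereas Darboux polynomials live in $\CC[X,Y]$ and the theorem concerns polynomials irreducible over $\CC$; the paper accordingly computes the \emph{absolutely} irreducible factors (factorization in $\overline{\QQ}[X,Y]$). This is not a cosmetic distinction, because your degree-$\le N$ filter can silently discard genuine Darboux polynomials. Concretely, take $D=Y\partial_X+2X\partial_Y$ and $N=1$: then $f=Y-\sqrt{2}X$ satisfies $D(f)=-\sqrt{2}f$, so $Y\pm\sqrt{2}X$ are absolutely irreducible Darboux polynomials of degree $1$, but the rational factorization of $\mathcal{E}_1(D)$ only exhibits their norm $Y^2-2X^2$, a $\QQ$-irreducible factor of degree $2$, which your filter rejects. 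In general, an absolutely irreducible Darboux polynomial of degree $\le N$ defined over a proper extension of $\QQ$ hides inside a $\QQ$-irreducible factor of degree up to $N\cdot[\text{field degree}]>N$, so your algorithm can output a strictly smaller set than the theorem promises (and even when the rational factor has degree $\le N$ and passes the divisibility test, it may be reducible over $\CC$, so the output is not a set of irreducible Darboux polynomials). The repair is exactly the paper's move: invoke a deterministic polynomial-time \emph{absolute} factorization algorithm (Kaltofen, Dvornicich--Traverso, Trager, Ch\`eze--Lecerf, as recalled in the paper's Section 1.4), whose cost is again polynomial in the degree and bit-size of $\mathcal{E}_N(D)$, so the complexity claim is unaffected.

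Two smaller points. First, your determinant degree bound $\bigO(dN^3)$ undercounts: the entries in row $i$ have degree $\bigO(i(d-1)+N)$ and there are $\ell=\bigO(N^2)$ rows, giving $\deg\mathcal{E}_N(D)=\bigO(dN^4)$ as in the paper; this changes nothing since the bound is still polynomial. Second, for the finiteness criterion in part (1) you justify only the direction ``$\mathcal{E}_N(D)\neq 0$ implies finitely many''; the converse (vanishing of $\mathcal{E}_N(D)$ forces infinitely many irreducible Darboux polynomials) is not automatic from the divisibility statement alone — it rests on Pereira's theorem that $\mathcal{E}_N(D)=0$ occurs precisely when $D$ admits a rational first integral of degree at most $N$, whose pencil $\lambda p+\mu q$ then supplies infinitely many irreducible Darboux polynomials since the spectrum is finite. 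You gesture at this (``the same circle of ideas''), and citing Pereira here is legitimate, but it should be stated as the separate ingredient it is rather than folded into the divisibility property.
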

To author's knowledge this is the first polynomial-time result on the computation of Darboux polynomials.\\

In this paper we suppose that $A$ and $B$ have integer coefficients and are coprime. This hypothesis is not restrictive. Indeed, if $A$ and $B$ have rational coefficients then we can always reduce  our study to the case of integral coefficients by clearing denominators.   Furthermore if $A$ and $B$ have a nontrivial greatest common divisor $R$ then we set $ds=Rdt$. This gives a derivation $D_2= A/R \partial_X+ B/R \partial_Y$. If $D_2$ has a first integral then $D$ has a first integral, thus the hypothesis ``$A$ and $B$ are coprime'' is not restrictive.\\
Furthermore in this paper we describe algorithms in the bivariate case in order to emphasize their role in the Prelle-Singer's algorithm. Nevertheless, Proposition \ref{Pereira_Prop} which is our main tool is also true in the multivariate case, i.e. when we consider a derivation $D=A_1\partial_{X_1}+\cdots+A_n\partial_{X_n}$ where $A_i \in \ZZ[X_1,\ldots,X_n]$. Then the Lagutinskii-Pereira's algorithm, see Section \ref{Section_comput_darboux}, is also correct in the multivariate case.\\

If the polynomial derivation $D$ has an infinite number of Darboux polynomials then by Darboux's theorem, $D$ has a rational first integral. Thus in this situation the problem is the computation of a rational first integral. We prove the following result:

\begin{Thm}\label{Thm2}
Let $D=A(X,Y)\partial_X+ B(X,Y)\partial_Y$ be a polynomial derivation such that $A(X,Y), B(X,Y) \in \ZZ[X,Y]$, $\deg A \leq d$, $\deg B \leq d$, $\|A\|_{\infty} \leq \mathcal{H}$, $\|B\|_{\infty} \leq \mathcal{H}$ and $A$, $B$ are coprime.
\begin{enumerate}
\item We can decide if there exists a rational first integral $\mathcal{F}$ of degree smaller than $N$ with $\bigO\Big( \big(dN\log(\mathcal{H})\big)^{\bigO(1)}\Big)$ binary operations.
\item If there exists a rational first integral with degree smaller than $N$ then we can compute it in a deterministic way with $\bigO\Big( \big(dN\log(\mathcal{H})\big)^{\bigO(1)}\Big)$ binary operations.
\end{enumerate}
\end{Thm}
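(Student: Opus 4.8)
The plan is to reduce both statements to the vanishing of the ecstatic curve, and then to combine linear algebra with a power-series reconstruction. Write $\mathcal{E}_N$ for the ecstatic curve of order $N$ from Section~\ref{Sect_ecstatic}, i.e.\ the determinant of the $\ell\times\ell$ matrix $M_N$ with $\ell=\binom{N+2}{2}$ whose $(i,j)$ entry is $D^{i-1}(v_j)$, where $v_1,\dots,v_\ell$ is the monomial basis of $\CC[X,Y]_{\le N}$. The first point I would establish is the equivalence: $D$ has a rational first integral of degree $\le N$ if and only if $\mathcal{E}_N\equiv 0$. One direction is elementary — if $\mathcal{F}=P/Q$ is such a first integral, then every member $\lambda P+\mu Q$ of the pencil is a Darboux polynomial of degree $\le N$, so there are infinitely many invariant curves of degree $\le N$, which forces $\mathcal{E}_N\equiv 0$ by Proposition~\ref{Pereira_Prop}; the converse is exactly the content of Proposition~\ref{Pereira_Prop}. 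Since a first integral of degree $m$ produces invariant curves of degree $m\le N$ for \emph{every} $N\ge m$, the predicate ``$\mathcal{E}_N\equiv 0$'' is monotone in $N$, which will let me binary-search for the minimal degree.

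For the decision statement (1), I would test whether $\mathcal{E}_N\equiv 0$ without ever expanding the determinant: since $M_N$ is square and $\mathcal{E}_N=\det M_N$, it suffices to compute the rank of $M_N$ over $\QQ(X,Y)$. The entries $D^{i}(v_j)$ are built iteratively, have degree $\bigO(N^2 d)$ and controlled bit-size, so fraction-free Gaussian elimination (or evaluation at $\bigO(N^{\bigO(1)})$ deterministic points together with a consistency check) decides the vanishing with $\bigO\big((dN\log(\mathcal{H}))^{\bigO(1)}\big)$ binary operations. If $\mathcal{E}_N\not\equiv 0$ I output ``no'' — and, by Theorem~\ref{Thm1}, we are moreover in the finite case; if $\mathcal{E}_N\equiv 0$ I output ``yes''.

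For the construction statement (2), I would first binary-search for the least $N_0\le N$ with $\mathcal{E}_{N_0}\equiv 0$; by the monotonicity above, this $N_0$ is the minimal degree of a rational first integral, and at this degree a \emph{primitive} first integral $\mathcal{F}=P/Q$ of degree $N_0$ exists, whose generic fiber is irreducible, so that $P$ and $Q$ are coprime. To recover $P/Q$ I would fix a deterministically chosen regular point $p_0=(x_0,y_0)$ with $A(p_0)\neq 0$, compute a formal power-series first integral $\hat{\mathcal{F}}=\sum c_{ij}(X-x_0)^i(Y-y_0)^j$ annihilated by $D$ — each coefficient being obtained from a triangular linear recurrence, hence in polynomial time — up to total order $\bigO(N_0^2)$, and then perform a Hermite–Padé (rational reconstruction) step: solve the linear system for $P,Q\in\QQ[X,Y]_{\le N_0}$ with $P-\hat{\mathcal{F}}\,Q$ vanishing to high order at $p_0$. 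A final \emph{exact} check $D(P)Q-P\,D(Q)=0$ certifies the output $\mathcal{F}=P/Q$, and the cost is dominated by the power-series computation and the linear solve, both $\bigO\big((dN\log(\mathcal{H}))^{\bigO(1)}\big)$.

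The main obstacle is the construction in (2) rather than the decision in (1): I must guarantee that the reconstruction returns the \emph{true} rational first integral in polynomial time. This requires, first, choosing $p_0$ so that $\hat{\mathcal{F}}$ exists and the Padé approximant is unique — controlled by excluding the finitely many bad points through a short deterministic search, using the coprimality of $P,Q$ coming from the irreducibility of the generic fiber; second, proving height bounds on the coefficients $c_{ij}$, and hence on $P$ and $Q$, so that the number of \emph{binary} operations, not merely arithmetic operations, stays polynomial in $d$, $N$ and $\log(\mathcal{H})$; and third, certifying correctness through the exact identity $D(P)Q-P\,D(Q)=0$, which simultaneously removes any dependence on the heuristic choice of $p_0$. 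Once these bit-size estimates are in place, the stated complexity bound follows.
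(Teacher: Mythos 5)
Your part (1) is essentially the paper's own decision procedure, but note a misattribution that matters: the implication ``$\mathcal{E}_N(D)\equiv 0$ $\Rightarrow$ there is a rational first integral of degree at most $N$'' is \emph{not} the content of Proposition~\ref{Pereira_Prop} — that proposition only says Darboux polynomials divide $\mathcal{E}_N(D)$, and the paper explicitly remarks that its converse is false. The equivalence you need is the much deeper Proposition~\ref{ecstatic_nulle} (Pereira's Theorem~1), which fortunately is available, so the decision step survives with the correct citation.

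Part (2), however, has a genuine gap. At a regular point $p_0$ the kernel of $D$ on formal power series is $\{u(\hat{\mathcal{F}}_0)\}$ where $\hat{\mathcal{F}}_0$ is any one local first integral and $u$ ranges over univariate series: a formal first integral is only determined up to post-composition. Your recurrence fixes one representative through whatever initial data you impose on the transversal $X=x_0$ (say $\hat{\mathcal{F}}(x_0,Y)=Y-y_0$), and the rational first integral satisfies $P/Q=u\circ\hat{\mathcal{F}}$ for an \emph{unknown} $u$ (generically algebraic, not rational). So $\hat{\mathcal{F}}$ itself is almost never rational, the linear Hermite--Pad\'e system asking that $P-\hat{\mathcal{F}}Q$ vanish to order $\gg N_0$ generically has only the zero solution, and your algorithm fails to produce output precisely in the case it is supposed to handle; the exact certificate $D(P)Q-PD(Q)=0$ can reject a wrong answer but cannot supply the missing existence. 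A correct repair along your lines is to expand a \emph{solution curve} $\tilde Y(X)$ of $dY/dX=B/A$ rather than a bivariate first integral: along an orbit $P-cQ$ vanishes identically on its Zariski closure, so two orbits through generic points yield two members of the pencil, from which $P/Q$ is recovered linearly (this is essentially the later Bostan--Ch\`eze--Cluzeau--Weil approach, and it still requires the height bounds you flagged). The paper avoids the issue entirely by a different mechanism: it shifts the derivation to $D_k$ over the grid $\{0,\dots,N^3-1\}^2$, shows via Proposition~\ref{Prop_ecst_moins_zero} that the punctured ecstatic determinant $\mathcal{E}_{n,0}(D_k)$ is nonzero and has the specific pencil member $-q(x_k,y_k)p_k+p(x_k,y_k)q_k$ as a degree-$n$ factor, extracts one degree-$n$ Darboux polynomial by bivariate factorization, computes its cofactor $g$, and recovers $\{p,q\}$ as a basis of the two-dimensional kernel of $f\mapsto D_k(f)-gf$ (Lemma~\ref{lem_dimker2}), with a Zippel--Schwartz count bounding the number of bad shifts — trading your analytic reconstruction for one factorization plus linear algebra, with all size bounds already supplied by Corollary~\ref{Cor-size}.
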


To author's knowledge this is the first polynomial-time result on  the computation of rational first integrals.\\

\subsection*{Structure of this paper}
In Section \ref{Section_classic} we recall some classical results about Darboux polynomials, the spectrum of a rational function and the complexity of  bivariate factorization. In Section \ref{Section_undet} we show that the method of undetermined coefficients can give an exponential number of reducible Darboux polynomials. In Section \ref{Sect_ecstatic} we give the definition and some properties of the ecstatic curve. In Section \ref{Section_comput_darboux} we prove Theorem \ref{Thm1} and in Section \ref{Section_rat_first_int} we prove Theorem \ref{Thm2}. At last in Section \ref{Sect-openquestions} we ask two questions about complexity and  polynomial differential equations.

\subsection*{Notations}
Let $f(X,Y) =\sum_{i,j} f_{i,j}X^iY^j \in \ZZ[X,Y]$ be a polynomial.\\
$\|f\|_{\infty}=max_{i,j}|f_{i,j}|$ is the height of the polynomial $f$.\\
$\deg f$ is the total degree of the polynomial $f$. The degree of a reduced rational function $p/q$ is the maximum of $\deg p$ and $\deg q$.\\
The bit-size of a bivariate polynomial $f$ is $(\deg f)^2\log(\|f\|_{\infty})$.\\
$\partial_X$ (resp. $\partial_Y$) denotes the derivative relatively to the variable $X$ (resp. $Y$).\\
We set: $\dv(A,B)=\partial_X A + \partial_Y B$.


\section{Classical results}\label{Section_classic}
\subsection{Darboux polynomials}
\begin{Def}
Let $D=A(X,Y) \partial_X + B(X,Y)\partial_Y$ be the derivation associated to the planar polynomial differential system 
$$\dot{X}=A(X,Y), \quad \dot{Y}=B(X,Y),$$
where $A,B \in \ZZ[X,Y]$.\\
A polynomial $f \in \CC[X,Y]$ is said to be a Darboux polynomial  of $D$ if there exists a polynomial $g \in \CC[X,Y]$ such that $D(f)=g.f$.\\
The polynomial $g$ is called the cofactor of $f$.\\
If $\mathcal{F}$ is a function such that $D(\mathcal{F})=0$ then we said that $\mathcal{F}$ is a first integral of $D$.
\end{Def}

\begin{Prop}\label{propsomcof}
Let $f$ be a polynomial and let $f=f_1f_2$ be a factorization of $f$ where $f_1$ and $f_2$ are coprime. Then $f$ is a Darboux polynomial with cofactor $g$ if and only if $f_1$ and $f_2$ are Darboux polynomials with  cofactors $g_1$ and $g_2$. Furthermore $g=g_1+g_2$.
\end{Prop}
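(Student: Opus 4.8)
The plan is to prove the two implications separately, with the reverse direction following immediately from the Leibniz rule and the forward direction requiring the coprimality hypothesis together with the fact that $\CC[X,Y]$ is a unique factorization domain.

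For the implication ($\Leftarrow$), I would suppose $D(f_1)=g_1 f_1$ and $D(f_2)=g_2 f_2$. Since $D$ is a derivation it obeys the product rule, so
$$D(f)=D(f_1 f_2)=D(f_1) f_2+f_1 D(f_2)=(g_1+g_2) f_1 f_2=(g_1+g_2) f.$$
Hence $f$ is a Darboux polynomial with cofactor $g_1+g_2$. Note that this direction does not use the coprimality of $f_1$ and $f_2$; it is valid for any factorization.

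For the implication ($\Rightarrow$), I would suppose $D(f)=g f$. Applying the product rule again gives $D(f_1) f_2+f_1 D(f_2)=g f_1 f_2$, and rearranging yields $D(f_1) f_2=f_1\big(g f_2-D(f_2)\big)$. Thus $f_1$ divides $D(f_1) f_2$. The coprimality hypothesis $\gcd(f_1,f_2)=1$, together with the UFD property of $\CC[X,Y]$, then forces $f_1 \mid D(f_1)$. Consequently the quotient $g_1:=D(f_1)/f_1$ is a genuine polynomial, so $f_1$ is a Darboux polynomial with cofactor $g_1$; the symmetric argument shows that $f_2$ is a Darboux polynomial with cofactor $g_2:=D(f_2)/f_2$.

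Finally, once both $f_1$ and $f_2$ are known to be Darboux polynomials, the computation in the first paragraph shows $D(f)=(g_1+g_2) f$. Comparing with $D(f)=g f$ and using $f\neq 0$ gives $g=g_1+g_2$, the asserted additivity of cofactors. The only genuinely nontrivial point in the whole argument is the divisibility step in the forward direction, where coprimality is essential: without it the individual quotients $D(f_i)/f_i$ need not be polynomials, and the factors need not be Darboux polynomials separately.
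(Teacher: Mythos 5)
Your proof is correct and complete. For the record, the paper gives no argument of its own for this proposition: its ``proof'' is the citation ``See for example Lemma 8.3 page 216 in \cite{Dumortier_Llibre_Artes}.'' The argument you supply --- the Leibniz rule for ($\Leftarrow$), and for ($\Rightarrow$) the divisibility $f_1 \mid D(f_1)f_2$ combined with $\gcd(f_1,f_2)=1$ and Euclid's lemma in the UFD $\CC[X,Y]$, followed by cancellation of $f\neq 0$ to get $g=g_1+g_2$ --- is exactly the standard proof that such a reference contains, so you have filled the gap the paper delegates to the literature, with no missing steps (the implicit assumption $f\neq 0$ is harmless and needed anyway for cofactors to be well defined).

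One quibble with your closing sentence: coprimality is essential for \emph{your} one-line divisibility step, but not for the truth of the conclusion over $\CC[X,Y]$. In characteristic zero, every irreducible factor $p_i$ of a Darboux polynomial $f=\prod_i p_i^{n_i}$ is itself Darboux: expanding $D(f)=gf$ by Leibniz, every term of $D(f)$ except the $i$-th is divisible by $p_i^{n_i}$, whence $p_i^{n_i}$ divides $n_i p_i^{n_i-1}D(p_i)\prod_{j\neq i}p_j^{n_j}$, so $p_i \mid n_i D(p_i)$ and hence $p_i\mid D(p_i)$ since $n_i\neq 0$. It follows that \emph{every} divisor of a Darboux polynomial is a Darboux polynomial (and $D(p^n)=n g_p\, p^n$ keeps the cofactor additivity intact), so the claim ``without coprimality the factors need not be Darboux polynomials separately'' is false in this setting; what fails without coprimality is only the quick route to $f_1\mid D(f_1)$, not the statement. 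In positive characteristic, where $n_i$ can vanish, your caveat would be accurate, but that is outside the paper's scope.
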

\begin{proof}
See for example Lemma 8.3 page 216 in \cite{Dumortier_Llibre_Artes}.
\end{proof}

Now we recall Darboux's Theorem.

\begin{Thm}[Darboux's Theorem]\label{darbouxthm}
Let $A,B \in \ZZ[X,Y]$ and let $D=A \partial_X+B\partial_Y$. If $f_1,\dots,f_m \in \CC[X,Y]$ are relatively prime irreducible Darboux polynomials for $i=1,\dots,m$, then either $m<d(d+1)/2+2$ where $d=\max(\deg A,\deg B)$ or there exist integers $n_i$ not all zero such that $D(w)=0$, where $w=\prod_{i=1}^mf_i^{n_i}$. In the latter case, if $f$ is any irreducible Darboux polynomial, then either there exists $\lambda, \mu$ in $\CC$, not both zero such that $f$ divides $\lambda\prod_{i \in I}f_i^{n_i}-\mu\prod_{j \in J}f_j^{-n_j}$ where $I=\{i\, | \, n_i \geq 0\}$ and $J=\{j \, |\, n_j <0\}$, or $G$ divides $\gcd(A,B)$.
\end{Thm}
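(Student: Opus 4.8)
The plan is to control the cofactors by a single dimension count and then split into the two announced cases. First I would record that for any Darboux polynomial $f$ with $D(f)=gf$ one has $\deg g\le d-1$: indeed $D(f)=A\,\partial_X f+B\,\partial_Y f$ has degree at most $\deg f+d-1$, while $\deg(gf)=\deg g+\deg f$. Hence each cofactor $g_i$ of $f_i$ lies in the $\CC$-vector space $V$ of polynomials of degree $\le d-1$, whose dimension is $d(d+1)/2=:N$. If $m<N+2$ we are already in the first alternative, so from now on I assume $m\ge N+2$ and aim to produce the integer relation.

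Set $W=\{(c_1,\dots,c_m)\in\CC^m:\sum_i c_ig_i=0\}$, the kernel of the $\CC$-linear map $(c_i)\mapsto\sum_i c_ig_i$ from $\CC^m$ into $V$; since $m\ge N+2$, we get $\dim_\CC W\ge m-N\ge 2$. A vector $(c_i)\in W$ is exactly what makes $\prod_i f_i^{c_i}$ a (possibly multivalued) first integral, because $D\big(\prod_i f_i^{c_i}\big)\big/\prod_i f_i^{c_i}=\sum_i c_ig_i$. Equivalently, writing $\omega_i=df_i/f_i$, the contraction $\omega_i(D)=D(f_i)/f_i=g_i$ shows that $\omega_c:=\sum_i c_i\omega_i$ is a closed rational $1$-form annihilating $D$, hence proportional over $\CC(X,Y)$ to the foliation form $A\,dY-B\,dX$. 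Choosing two $\CC$-independent relations $c,c'\in W$ yields two such proportional closed forms $\omega_c=h\,\omega_{c'}$; closedness forces $dh\wedge\omega_{c'}=0$, i.e. $D(h)=0$, and $h\in\CC(X,Y)$ is non-constant because $(c_i)\mapsto\omega_c$ is injective (distinct irreducible $f_i$ contribute independent residues). So $D$ already has a genuine single-valued rational first integral.

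The hard part, which I expect to be the main obstacle, is upgrading this to an \emph{integer} relation among the given $f_i$: linear dependence of the cofactors over $\CC$ is automatic, but the exponents $c_i$ need not be rational, and the rational first integral $h$ built above need not be a monomial in the $f_i$. The way I would close this gap is to pass to a primitive rational first integral, study its pencil of fibers $\lambda P-\mu Q$, and use that each invariant curve $\{f_i=0\}$ is a leaf and hence divides a single fiber; then the finitely many reducible or non-reduced fibers account for all the $f_i$, and the divisor (Picard) relations among these fibers produce integers $n_i$, not all zero, with $\prod_i f_i^{n_i}$ equal to a M\"obius function of the first integral, whence $D\big(\prod_i f_i^{n_i}\big)=0$. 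This is precisely Jouanolou's sharpening of Darboux's bound, and is the reason the threshold is $N+2$ rather than $N+1$; I would either reproduce this pencil argument or cite it.

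For the final classification, assume the integer relation holds and write $w=\prod_i f_i^{n_i}=P/Q$ with $P=\prod_{i\in I}f_i^{n_i}$ and $Q=\prod_{j\in J}f_j^{-n_j}$ coprime, where $I=\{i:n_i\ge 0\}$ and $J=\{j:n_j<0\}$. Let $f$ be any irreducible Darboux polynomial. Since $\{f=0\}$ is invariant, it is a leaf of the foliation, so the first integral $w$ is constant along it; thus $w\equiv c_0$ on $\{f=0\}$ for some $c_0\in\PP^1(\CC)$. Writing $c_0=\mu/\lambda$, the polynomial $\lambda P-\mu Q$ vanishes on $\{f=0\}$, and as $f$ is irreducible it divides $\lambda P-\mu Q=\lambda\prod_{i\in I}f_i^{n_i}-\mu\prod_{j\in J}f_j^{-n_j}$ (the degenerate values $c_0=0,\infty$ simply give $\mu=0$ or $\lambda=0$, not both). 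The only way this can fail is if $w$ is not well defined along $\{f=0\}$, that is, if $\{f=0\}$ lies in the locus $\{A=B=0\}$ where the foliation degenerates, which forces $f$ to divide $\gcd(A,B)$; this is exactly the remaining alternative. In particular, under the running hypothesis that $A$ and $B$ are coprime this exceptional case cannot occur.
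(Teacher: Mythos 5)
The paper gives no proof of this theorem at all --- it simply cites \cite{Darboux} and \cite{Singer} --- so your attempt has to be judged on its own merits. Two of your three stages are sound and standard: the dimension count (cofactors live in the space of polynomials of degree at most $d-1$, of dimension $d(d+1)/2$, so $m\geq d(d+1)/2+2$ forces a relation space $W$ with $\dim_{\CC}W\geq 2$, and two independent relations give closed logarithmic forms $\omega_c=h\,\omega_{c'}$ with $h$ a nonconstant rational first integral --- the injectivity-via-residues remark correctly rules out $h$ constant) is exactly the Darboux--Jouanolou argument; and the final classification (an irreducible Darboux polynomial $f$ with $f\nmid\gcd(A,B)$ is an invariant curve on which $w$ is constant, hence $f$ divides $\lambda\prod_{i\in I}f_i^{n_i}-\mu\prod_{j\in J}f_j^{-n_j}$, with the degenerate alternative $f\mid\gcd(A,B)$ --- the statement's ``$G$'' being a typo for $f$, as you correctly read it) is also correct.

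The genuine gap is the step you flag yourself: extracting \emph{integer} exponents supported on the given list $f_1,\dots,f_m$. Your sketch asserts that ``the finitely many reducible or non-reduced fibers account for all the $f_i$'' and that divisor relations among these fibers produce the $n_i$; but this is backwards --- each $f_i$ lies in some fiber, yet a spectral fiber $\lambda P-\mu Q$ may perfectly well contain irreducible components \emph{not} among the $f_i$, and then the divisor relation attached to that fiber involves unlisted curves and does not yield a first integral of the form $\prod_{i=1}^m f_i^{n_i}$. The missing argument is to show that at least two pencil members are \emph{entirely} supported on the list. One way: since $w=P/Q$ is non-composite, $\CC(w)$ is algebraically closed in $\CC(X,Y)$, so writing $\omega_c=\rho\,dw$ and using closedness forces $\rho\in\CC(w)$, i.e.\ $\omega_c=\rho(w)\,dw$ with $\rho$ a univariate rational function; partial fractions express $\omega_c$ as $\sum_j r_j\,d\log(P-c_jQ)-\big(\sum_j r_j\big)\,d\log Q$ plus an exact form, and comparing residues along irreducible curves shows that any component of a member carrying a nonzero coefficient must occur among the $f_i$ (an unlisted component of the fiber at $c_j$ forces $r_j=0$, an unlisted component of $Q$ forces $\sum_j r_j=0$), whence --- because $\omega_c$ has nonzero residues --- at least two members, possibly including $Q$, are fully listed. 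Since every member $\lambda P-\mu Q$ has the \emph{same} cofactor $g$ (from $D(P)=gP$, $D(Q)=gQ$ with common $g$, by coprimality of $P$ and $Q$), each fully listed member $\prod_i f_i^{e_i}$ gives $\sum_i e_ig_i=g$ with integer multiplicities, and subtracting two such identities (their supports are disjoint, distinct members of a pencil being coprime) yields the integer relation. Without this residue/multiplicity argument --- or an honest citation of \cite{Joua_Pfaff} or \cite{Singer} for it, which you offer as a fallback --- the passage from the $\CC$-linear relation to the $\ZZ$-relation remains unproved, and that passage is precisely the content distinguishing this theorem from the elementary dimension count.
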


\begin{proof}
See \cite{Darboux} or \cite{Singer}.
\end{proof}

\begin{Def}
A function $R$ is an integrating factor if $$D(R)=-\dv(A,B)R.$$
\end{Def}

\begin{Rem}
We remark that an integrating factor satisfies also one of these equivalent conditions:
$$\dv(RA,RB)=0, \,\, \partial_X(RA)=-\partial_Y(RB).$$
\end{Rem}
If we know an integrating factor then we can deduce a first integral. Indeed, if $R$ is an integrating factor then
\begin{equation}\label{int_prem}
 \mathcal{F}=\int RBdX-\int\Big( RA+\partial_Y \int RB dX \Big)dY
\end{equation}
is a first integral.
\subsection{Elementary solutions}
The following theorem  is due to  M.~Prelle and M.~Singer, see \cite{PrelleSinger}.
\begin{Thm}[Prelle-Singer]
If a polynomial differential system has an elementary first integral then there exists an integrating factor  which is a $K$th  root of a rational function.
\end{Thm}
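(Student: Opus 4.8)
The plan is to reformulate the statement in terms of the integrating factor, to use a structure theorem for elementary functions to force that integrating factor to be algebraic, and then to finish with a norm (Galois descent) computation. Given an elementary first integral $\mathcal{F}$, I would associate to the system the rational $1$-form $\omega = A\,dY - B\,dX$ whose integral curves are the orbits. Since the level sets $\mathcal{F} = \mathrm{const}$ are these orbits, $d\mathcal{F}$ is proportional to $\omega$, so there is a function $R$ with $d\mathcal{F} = R\,\omega$, i.e. $\partial_X\mathcal{F} = -RB$ and $\partial_Y\mathcal{F} = RA$. Equality of the mixed partials $\partial_X\partial_Y\mathcal{F} = \partial_Y\partial_X\mathcal{F}$ gives $\partial_X(RA) + \partial_Y(RB) = 0$, i.e. $\dv(RA,RB) = 0$; using the identity $\dv(RA,RB) = D(R) + R\,\dv(A,B)$ this is exactly $D(R) = -\dv(A,B)\,R$. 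Thus $R$ is an integrating factor with the polynomial cofactor $-\dv(A,B)$, and it suffices to show that $R$ may be chosen to be a $K$-th root of a rational function.

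The heart of the matter is that $R$ is algebraic over $\CC(X,Y)$. Here I would invoke the generalized Liouville theorem (Liouville--Ostrowski, in the refined form of Rosenlicht, adapted to this foliated two-variable setting by Prelle and Singer): because $\mathcal{F}$ is elementary and introduces no new constants, and because $d\mathcal{F} = R\,\omega$ is a multiple of a rational $1$-form, one can write
\[ d\mathcal{F} = dw_0 + \sum_i c_i\,\frac{dw_i}{w_i}, \]
where $w_0, w_i$ are algebraic over $\CC(X,Y)$ and $c_i \in \CC$. Comparing the $dX$-coefficients with $d\mathcal{F} = -RB\,dX + RA\,dY$ gives $-RB = \partial_X w_0 + \sum_i c_i\,\partial_X w_i / w_i$; the right-hand side is algebraic over $\CC(X,Y)$, since sums, products, quotients and derivatives of algebraic functions are again algebraic. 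As $A, B \in \ZZ[X,Y]$ are not both zero, dividing by $B$ (or, if $B \equiv 0$, using the $dY$-coefficient and dividing by $A$) shows that $R$ itself is algebraic over $\CC(X,Y)$.

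Once $R$ is algebraic I would finish by a norm argument. Let $K = [\CC(X,Y)(R):\CC(X,Y)]$ and let $\sigma$ range over the $K$ embeddings of $\CC(X,Y)(R)$ over $\CC(X,Y)$ (we are in characteristic zero, so everything is separable and $D$ extends uniquely to $\overline{\CC(X,Y)}$). Because $D = A\partial_X + B\partial_Y$ has coefficients in $\CC(X,Y)$, its unique extension commutes with every $\sigma$, and the cofactor $\dv(A,B) \in \CC(X,Y)$ is fixed by $\sigma$; hence each conjugate satisfies $D(\sigma(R)) = \sigma(D(R)) = -\dv(A,B)\,\sigma(R)$. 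Consequently the norm $N := \prod_\sigma \sigma(R) \in \CC(X,Y)$ satisfies $D(N)/N = \sum_\sigma D(\sigma(R))/\sigma(R) = -K\,\dv(A,B)$, so $R' := N^{1/K}$ obeys $D(R') = -\dv(A,B)\,R'$ and is therefore an integrating factor. By construction $R'^{\,K} = N \in \CC(X,Y)$, so $R'$ is a $K$-th root of a rational function, as required.

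The genuinely hard step is the algebraicity of $R$ in the second paragraph: this is the substantive content of the Prelle--Singer theorem and rests on the non-trivial facts that an elementary first integral adjoins no new constants and that, restricted to a generic leaf, it behaves like an elementary antiderivative of a rational function, so that Liouville's theorem applies and yields only logarithms of algebraic functions. By contrast, the reduction in the first paragraph and the norm computation in the third are routine. It is worth emphasizing that the norm step is precisely what converts the a priori complex constants $c_i$ produced by Liouville's theorem into the rational exponents implicit in the $K$-th-root conclusion, so that no separate argument for the rationality of the $c_i$ is needed.
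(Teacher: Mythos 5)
The paper does not actually prove this theorem: it states it as a known result with a pointer to \cite{PrelleSinger}, so there is no internal proof to compare against, and what you have written is essentially a reconstruction of the original Prelle--Singer argument. Your three steps are the right ones and the routine parts are carried out correctly: the reduction $d\mathcal{F}=R\,\omega$ with $\dv(RA,RB)=0$, hence $D(R)=-\dv(A,B)\,R$, matches the paper's definition of an integrating factor, and the Galois descent is sound --- the unique extension of $D$ to the normal closure commutes with the embeddings $\sigma$, the cofactor $-\dv(A,B)$ is $\sigma$-invariant, so the norm $N=\prod_\sigma\sigma(R)$ lies in $\CC(X,Y)$ and satisfies $D(N)=-K\,\dv(A,B)\,N$, whence $R'=N^{1/K}$ works. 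Two corrections are in order. First, a misstatement in the key step: the Liouville--Rosenlicht structure theorem (Theorem 1 of \cite{PrelleSinger}) does not decompose $d\mathcal{F}$ itself; it produces \emph{some} elementary first integral $w=w_0+\sum_i c_i\log w_i$ with $w_0,w_i$ algebraic over $\CC(X,Y)$, possibly different from $\mathcal{F}$. You should therefore run your first-paragraph computation on $w$ rather than on $\mathcal{F}$, obtaining an algebraic integrating factor $R$ attached to $w$; this changes nothing downstream, since only the existence of one algebraic integrating factor is needed. Second, your closing remark misattributes the role of the norm: the exponent $K$ is the degree $\bigl[\CC(X,Y)(R):\CC(X,Y)\bigr]$ and the norm eliminates the \emph{conjugates of $R$}, not the constants $c_i$; the $c_i$ never threatened algebraicity in the first place, as they lie in $\CC\subset\CC(X,Y)$. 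With these repairs your proof is complete modulo the invoked structure theorem, which --- as you correctly flag --- is the substantive content of \cite{PrelleSinger} and is not proved in the present paper either.
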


This gives the following method to compute an elementary first integral. This method is called the Prelle-Singer's method. Here we follow the description given in \cite{Man}.\\

 \textbf{\textsf{Prelle-Singer's method}}\\
\textsf{Input:} $D=A(X,Y)\partial_X+B(X,Y)\partial_Y$ a polynomial derivation, and  $N$ an integer.\\
\textsf{Output:}  An elementary  first integral, if it exists,  constructed with Darboux polynomials of degree smaller than $N$.  \\
\begin{enumerate}
\item Set $n=1$.
\item \label{step2}Find all monic irreducible polynomials $f_i$ such that $\deg f_i\leq n$ and $f_i$ divides $D(f_i)$.
\item \label{step3}Let $D(f_i)=g_if_i$. Decide if there are constants $n_i$, not all zero, such that
$$\sum_{i=1}^m n_ig_i=0.$$
If such $n_i$ exist then $\prod_{i=1}^m f_i^{n_i}$ is a first integral.\\
If no such $n_i$ exist then go to the next step.
\item \label{step4}Decide if there are constants $n_i$, such that
$$\sum_{i=1}^m n_i g_i=-\dv(A,B).$$
If such $n_i$ exist then $\prod_{i=1}^m f_i^{n_i}$ is an integrating factor for the given differential equation. A first integral is given by the formula (\ref{int_prem}).\\
If no such $n_i$ exists then go to the next step.
\item Increase the value $n$ by 1. If $n$ is greater than $N$ then return failure otherwise repeat the whole procedure.
\end{enumerate}


If we want to compute an integrating factor, it is well known, see \cite{Man}, that step \ref{step2} is the most difficult step of the Prelle-Singer's method. Indeed, when we have all the irreducible Darboux polynomials we just have to solve linear systems (see Step  \ref{step3} and Step \ref{step4}) to deduce and integrating factor.

\begin{Rem}
In Prelle-Singer's method the user must give a bound $N$. Nowadays we cannot remove this  input. Indeed, we do not know a bound on the maximal degree of irreducible Darboux polynomials of a given derivation. This is an open question and appears in \cite{Poincare,PrelleSinger}. The following example shows that we cannot get a bound  in term of the degree  of $D$ only. The bound must also take into account the coefficients of $D$.\\
\emph{Example:} The derivation $D= (n+1)X\partial_X + nY\partial_Y$
has  $X^n-Y^{n+1}$ as Darboux polynomial.
\end{Rem}

\subsection{Spectrum of a rational function}
In this subsection we recall the definition and a property of the spectrum of a rational function. This notion is used in Section \ref{Section_rat_first_int}.

\begin{Def}
A rational function  $f(X,Y) \in \QQ(X,Y)$  is said to be composite if it can be written $f=u \circ h$ where $h(X,Y) \in \QQ(X,Y)$ and $u \in\QQ(T)$ such that $\deg(u) \geq 2$. Otherwise $f$ is said to be non-composite.
\end{Def}

\begin{Def}
Let $f=p/q \in \QQ(X,Y)$ be a reduced rational function of degree $d$. The set 
\begin{eqnarray*}
\sigma(p,q) =\{(\lambda:\mu) \in \PP^1_{\CC} &\mid &\lambda p+ \mu q\textrm{ is reducible in } \CC[X,Y], \\
&&\textrm{ or } \deg (\lambda p + \mu q ) < d \, \}
\end{eqnarray*} 
is the spectrum of $f=p/q$. We recall that  a polynomial reducible in $\CC[X,Y]$ is said to be absolutely reducible.\\
\end{Def}

The spectrum $\sigma(p,q)$ is finite if and only if $p/q$ is non-composite and if and only if the pencil of algebraic  curves $\lambda p + \mu q=0$, has an irreducible general element (see for instance \cite[Chapitre 2, Th\'eor\`eme 3.4.6]{Joua_Pfaff} or \cite[Theorem 2.2]{Bodin} for detailed proofs).\\
 To author's knowledge, the first effective result on the spectrum has been given by Poincar\'e \cite{Poincare}. In this paper,  Poincar\'e gives a relation between the number of saddles of a polynomial vector field and the spectrum. He also shows that $ |\sigma(p,q)| \leq (2d-1)^2+2d+2$.
This bound was improved only recently by Ruppert \cite{Ruppert} who proved the following result:
\begin{Prop}\label{bornespectre}
If $p/q \in \QQ(X,Y)$ is a reduced non-composite rational function of degree $d$ then
$|\sigma(p,q)|\leq d^2-1$.
\end{Prop}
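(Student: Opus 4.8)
The plan is to read $\sigma(p,q)$ as the set of degenerate members of the pencil of plane curves $\mathcal{C}_{(\lambda:\mu)}\colon \lambda p+\mu q=0$, and to bound the total amount of reducibility such a pencil can carry. Since $p/q$ is non-composite, the generic member is absolutely irreducible, so $\sigma(p,q)$ is finite (by the result already cited from \cite{Joua_Pfaff,Bodin}); the real task is to pin the constant down to $d^2-1$. First I would homogenize $p,q$ to forms $P,Q$ of degree $d$ in $\CC[X,Y,Z]$, so that the degree-drop members (those with $\deg(\lambda p+\mu q)<d$) are exactly the ones acquiring the line at infinity $Z=0$ as a component. After this normalization every element of $\sigma(p,q)$ is a genuinely reducible fiber, and it suffices to bound the number of reducible fibers of the pencil.

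The central tool is Ruppert's differential criterion for reducibility. To a squarefree polynomial $f$ of degree $d$ with distinct irreducible factors $f_1,\dots,f_k$ one attaches the $\CC$-vector space of closed rational $1$-forms
$$\mathcal{K}(f)=\Big\{\,\tfrac{a\,dX+b\,dY}{f}\ :\ a,b\in\CC[X,Y],\ \deg a,\deg b\le d-1,\ \partial_Y\!\big(\tfrac{a}{f}\big)=\partial_X\!\big(\tfrac{b}{f}\big)\,\Big\},$$
where suitable bidegree bounds are imposed to avoid spurious poles at infinity. Each logarithmic derivative $\tfrac{df_i}{f_i}=\tfrac{(f/f_i)\,df_i}{f}$ lies in $\mathcal{K}(f)$ and, by a direct count, has numerator of degree exactly $d-1$; these $k$ forms are $\CC$-linearly independent, since the residue along $f_j=0$ isolates the $j$-th coefficient. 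Ruppert's theorem is that they in fact span $\mathcal{K}(f)$, so $\dim_\CC\mathcal{K}(f)=k$; in particular $f$ is reducible if and only if $\dim_\CC\mathcal{K}(f)\ge 2$. I would prove this by showing that any closed form in $\mathcal{K}(f)$ is logarithmic with residues that are locally constant along each branch, hence a $\CC$-combination of the $\tfrac{df_i}{f_i}$.

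With the criterion in hand, the bound follows by globalizing over the pencil. For each degenerate member $f_{(\lambda:\mu)}$, with $n_{(\lambda:\mu)}$ absolutely irreducible components, I would take its excess classes in $\mathcal{K}(f_{(\lambda:\mu)})/\CC\cdot\tfrac{df_{(\lambda:\mu)}}{f_{(\lambda:\mu)}}$, of dimension $n_{(\lambda:\mu)}-1$, and assemble all of them, after passing to a common denominator built from $P$ and $Q$, into a single finite-dimensional space of closed polynomial $1$-forms attached to the pencil. A residue/support argument shows the contributions of distinct members are independent, so
$$\sum_{(\lambda:\mu)\in\PP^1_{\CC}}\big(n_{(\lambda:\mu)}-1\big)\le \dim(\text{that space}).$$
A dimension count of closed forms of the relevant bidegree on $\PP^2$ yields exactly $d^2-1$ on the right, and since each point of $\sigma(p,q)$ contributes at least $1$ to the left (every such member has $n_{(\lambda:\mu)}\ge 2$ after adding the line at infinity), we conclude $|\sigma(p,q)|\le d^2-1$.

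The main obstacle is this last dimension count together with the independence of the per-member contributions. It requires careful control of numerator degrees when several denominators are combined, and correct bookkeeping of the members degenerating at infinity, where $Z=0$ must be counted as an extra component. Making the constant sharply $d^2-1$, rather than a weaker quadratic bound such as Poincar\'e's $(2d-1)^2+2d+2$, is exactly where Ruppert's theorem concentrates its content, and is where essentially all the work lies.
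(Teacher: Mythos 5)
The paper does not actually prove this proposition: it is quoted from Ruppert \cite{Ruppert}, with only the remark that it is a byproduct of his reducibility criterion. So your proposal is, in effect, an attempt to reconstruct the cited proof, and your skeleton is indeed Ruppert's route (it is also essentially the route of \cite{Bus_Che} for the total order of reducibility of a pencil): homogenize so that degree-dropping members acquire the line $Z=0$ as a component, establish that for squarefree $f$ with $k$ absolutely irreducible factors the space $\mathcal{K}(f)$ of closed $1$-forms with simple poles along $f=0$ has dimension $k$, then globalize over the pencil by a residue-independence argument inside a space of dimension $d^2-1$. However, two of your steps are genuinely incomplete or fail as written.

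First, non-reduced members of the pencil break your final inequality. A member $\lambda p+\mu q=G^k$ with $G$ absolutely irreducible and $k\geq 2$ belongs to $\sigma(p,q)$ (it is reducible as a polynomial) yet has a single distinct component and no degree drop; for instance the pencil generated by $p=X^2$, $q=Y^2-X$ is non-composite with generic member a smooth conic, but its fiber at $(1:0)$ is $X^2$. Such a point contributes $n_{(\lambda:\mu)}-1=0$ to your left-hand sum, since your space $\mathcal{K}(f)$ is built only for squarefree $f$ and sees only logarithmic forms; hence the chain $|\sigma(p,q)|\leq\sum_{(\lambda:\mu)}\bigl(n_{(\lambda:\mu)}-1\bigr)$ is false as stated. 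Ruppert's argument handles these fibers by admitting closed forms with poles of order $\geq 2$ (exact parts), equivalently by counting irreducible factors \emph{with multiplicity}. Second, the sharp constant is not within reach of your affine normalization: with $\deg a,\deg b\leq d-1$ the ambient numerator space has dimension $d(d+1)$, and the natural minor-degree count then yields a bound like $d^2+d-1$. The constant $d^2-1$ comes from working projectively with forms $\omega=(A\,dX+B\,dY+C\,dZ)/F$, $A,B,C$ homogeneous of degree $d-1$ subject to the Euler relation $XA+YB+ZC=0$, whose solution space has dimension $3\binom{d+1}{2}-\binom{d+2}{2}=(d-1)(d+1)=d^2-1$; this count, together with the cross-fiber residue independence you only assert, is precisely the content you defer as ``where essentially all the work lies.'' In short: right strategy, faithful to the cited source, but the decisive dimension count is missing and the treatment of multiple fibers must be repaired before the argument closes.
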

 This result was obtained as a byproduct of a very interesting technique developed to decide the reducibility of an algebraic plane curve.\\
Several papers improve this result,  see e.g. \cite{Lo,Vi,AHS,Bodin, Bus_Che}.

\begin{Rem}
If $p/q$ is a reduced non-composite rational first integral of $D$ then $p+\lambda q$, where $\lambda \in \CC$, are Darboux polynomials. Then $D$ has  infinitely many irreducible Darboux polynomials because the spectrum $\sigma(p,q)$ is finite.
\end{Rem}

\subsection{Complexity results}\label{Complex_result}
In this paper we consider the dense representation of polynomials in the usual monomial basis. We recall that we can factorize in a deterministic way an integer univariate polynomial $f$ with $\bigO\Big( \big(d \log(\mathcal{H})\big)^{\bigO(1)} \Big)$ binary operations,  where $d$ is the degree of $f$ and $\mathcal{H}$ its height, see e.g. \cite{Schonhage}. An algorithm with this kind of complexity is called a polynomial-time algorithm because its complexity is bounded by a polynomial in the size of the input. We recall that the bit-size of an univariate polynomial $f$ of degree $d$ and height $\mathcal{H}$ is $d \log (\mathcal{H})$.\\
The first polynomial-time algorithm for the factorization of univariate polynomials is due to Lenstra, Lenstra and Lovasz, see \cite{LLL}. However this algorithm, called LLL,  is probabilistic because it uses Berlekamp's algorithm which is probabilistic,  see \cite{Berlekamp}. Nevertheless, there exist deterministic polynomial-time algorithms, see e.g. \cite{Schonhage,KLL}. In \cite{Schonhage,KLL} the strategy is numerical. Instead of computing a modular factorization as in \cite{LLL}, the authors propose to compute a complex root with a sufficiently high precision.\\

In \cite{Kaltof1} the author shows that we can reduce bivariate factorization to univariate factorization. Then we get a deterministic polynomial-time algorithm for the factorization of integer bivariate polynomials. Another polynomial-time algorithm is proposed in \cite{LLL_multi}, the authors extend to the multivariate case the LLL algorithm.\\
Few time later, several papers see \cite{Kal85,DT89,Trag},   show that we can also compute the absolute factorization (i.e. the factorization in $\overline{\QQ}[X,Y]$, where $\overline{\QQ}$ is the algebraic closure of $\QQ$) of integer bivariate polynomials with a deterministic polynomial-time algorithm. Thus there exist deterministic algorithms which perform the absolute factorization of a bivariate polynomial $f(X,Y) \in \QQ[X,Y]$ of degree $d$ and height $\mathcal{H}$ with  $\bigO\Big( \big(d \log(\mathcal{H})\big)^{\bigO(1)} \Big)$  binary operations.\\
For the history of factorization's algorithm the reader can consult \cite{K2,Kaltofsurvey}. For more recent results about complexity and bivariate polynomials factorization see  \cite{Gao,BLSSW,Lec2006,Belabasetal,CL,Sombra_Krick_Aven,Weim}.\\

We also mention here that we can solve a linear system, and compute a determinant, with coefficients in $\ZZ$ or in $\ZZ[X,Y]$, in polynomial-time, see e.g. \cite{Yap,Burgetal,Horowitz}. This will be useful in our complexity analysis.

\section{The method of undetermined coefficients}\label{Section_undet}
Usually, Darboux polynomials are computed with the method of undetermined coefficients. In other words, we write: $D(f)=g.f$ with $\deg g \leq d$, where $d$ is the degree of $D$, and $\deg f \leq N$. This gives a polynomial system in the unknown coefficients of $g$ and $f$. This system has $\bigO(N^2+d^2)$ unknowns. In this section, we show that the method of undetermined coefficients can give  an exponential numbers of \emph{reducible} Darboux polynomials. We recall that only \emph{irreducible} Darboux polynomials are useful because the product of Darboux polynomials is a Darboux polynomial, see Proposition \ref{propsomcof}. 
\begin{Lem}\label{lem_exemple_expo}
If we consider the following derivation:
$$D= (\partial_Y \mathcal{F}) \partial_X - (\partial_X \mathcal{F}) \partial_Y, \textrm{ with } \mathcal{F}(X,Y)=Y \prod_{i=1}^{d-1} (X+i) +X, $$
then there are at least $2^{d-1}+1$ Darboux polynomials with degree smaller than $d$.
\end{Lem}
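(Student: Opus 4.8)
## Proof Strategy for Lemma \ref{lem_exemple_expo}

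The plan is to exhibit an explicit exponential family of Darboux polynomials for this derivation by exploiting its Hamiltonian structure. The key observation is that $D$ is the Hamiltonian derivation associated to $\mathcal{F}$, so that $D(\mathcal{F}) = (\partial_Y\mathcal{F})(\partial_X\mathcal{F}) - (\partial_X\mathcal{F})(\partial_Y\mathcal{F}) = 0$. Thus $\mathcal{F}$ is a rational (in fact polynomial) first integral, and by the remark following Proposition \ref{bornespectre}, every polynomial of the form $\mathcal{F} + \lambda$ with $\lambda \in \CC$ is a Darboux polynomial (with cofactor $0$). This already gives infinitely many Darboux polynomials, but they do not all have degree smaller than $d$, nor are they the source of the exponential count; the real point is to extract many \emph{distinct irreducible factors} among level sets of $\mathcal{F}$.

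The main step is to find a value $\lambda_0$ for which $\mathcal{F} + \lambda_0$ factors into many linear pieces, and then form products. First I would examine the structure of $\mathcal{F}(X,Y) = Y\prod_{i=1}^{d-1}(X+i) + X$. For the specific choice making the problem work, I would look at the polynomial $\mathcal{F}$ itself, or a suitable translate, and show it splits as a product over the factors $(X+i)$. Concretely, since $D(\mathcal{F})=0$, each irreducible factor $h$ of $\mathcal{F}$ satisfies $D(h) = g_h \cdot h$ by Proposition \ref{propsomcof}: the factorization of a Darboux polynomial yields Darboux polynomials. So if $\mathcal{F}$ (or a translate $\mathcal{F}+\lambda_0$) has $d-1$ distinct irreducible factors $h_1,\dots,h_{d-1}$, each of degree at most $d$, then every one of the $2^{d-1}$ subproducts $\prod_{i \in S} h_i$ for $S \subseteq \{1,\dots,d-1\}$ is again a Darboux polynomial, again by Proposition \ref{propsomcof} (a product of Darboux polynomials with a common... — more precisely, the cofactor of the product is the sum of cofactors). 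Counting the $2^{d-1}$ subsets, plus possibly one extra Darboux polynomial to reach $2^{d-1}+1$, gives the claimed lower bound, provided every such subproduct has degree smaller than $d$.

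The hard part, and the step I expect to be the main obstacle, is the degree bookkeeping: I must verify that the $d-1$ irreducible factors can be chosen so that \emph{all} $2^{d-1}$ subproducts have total degree strictly less than $d$. This forces the factors to be essentially of degree one in a controlled variable. I would therefore aim to show that a suitable level set $\mathcal{F} = \lambda_0$ factors as $\prod_{i=1}^{d-1} (\text{linear in } Y) = \prod_{i=1}^{d-1}\bigl(Y(X+i) + c_i\bigr)$ after an appropriate specialization, or else reorganize the computation so that the relevant Darboux polynomials are the $d-1$ cofactor-zero curves $Y + \text{const}/(X+i)$ cleared of denominators. The cleanest route is to observe that the functions $X+i$ themselves, together with the interplay with $Y$, generate the needed factors; I would check directly that each candidate $h_i$ satisfies $D(h_i) = g_i h_i$ with $\deg h_i$ small, and that distinct $i$ give coprime $h_i$ so that Proposition \ref{propsomcof} applies to every subproduct. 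Once coprimality and the degree bound are established, the exponential count $2^{d-1}+1$ follows immediately from the subset-counting argument, completing the proof.
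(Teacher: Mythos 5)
Your instinct that the Hamiltonian structure $D(\mathcal{F})=0$ drives everything is right, and your last-sentence fallback even names the correct family, but the route you actually develop would fail. A single translate $\mathcal{F}+\lambda_0$ can never supply $d-1$ distinct irreducible factors: writing $\mathcal{F}+\lambda = Y P(X)+X+\lambda$ with $P(X)=\prod_{i=1}^{d-1}(X+i)$, this polynomial has degree $1$ in $Y$, so any factorization has the shape $c(X)\bigl(Ya(X)+b(X)\bigr)$ with $c(X)$ dividing $\gcd\bigl(P(X),X+\lambda\bigr)$; hence $\mathcal{F}+\lambda$ has at most two irreducible factors, exactly two when $\lambda=i_0\in\{1,\dots,d-1\}$, namely $\mathcal{F}+i_0=(X+i_0)\bigl(Y\prod_{j\neq i_0}(X+j)+1\bigr)$. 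For the same $Y$-degree reason your proposed splitting $\prod_{i=1}^{d-1}\bigl(Y(X+i)+c_i\bigr)$ of a level set is impossible (it would have $Y$-degree $d-1$, not $1$), and the individual polynomials $Y(X+i)+c$ are not Darboux polynomials: already for $d=2$ one computes $D\bigl(Y(X+1)+c\bigr)=(X+1)Y-(Y+1)(X+1)=-(X+1)$, which is not a multiple of $Y(X+1)+c$.

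The step you defer with ``I would check directly'' is in fact the entire content of the paper's proof, and it is a one-line computation you never perform: since $\partial_Y\mathcal{F}=\prod_{j=1}^{d-1}(X+j)$, one has $D(X+i)=\partial_Y\mathcal{F}=\bigl(\prod_{j\neq i}(X+j)\bigr)(X+i)$, so each $X+i$ is an irreducible Darboux polynomial with cofactor $\prod_{j\neq i}(X+j)$. The $X+i$ are pairwise coprime, so Proposition \ref{propsomcof} makes every subproduct $\prod_{i\in I}(X+i)$, $I\subseteq\{1,\dots,d-1\}$, a Darboux polynomial of degree at most $d-1$, which gives the $2^{d-1}$ polynomials; the ``possibly one extra'' you leave vague is $\mathcal{F}$ itself, a Darboux polynomial with cofactor $0$ of degree exactly $d$, irreducible because it has $Y$-degree $1$ and $\gcd(P,X)=1$, hence distinct from all the subproducts. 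Your translate idea is not wasted -- it explains \emph{why} the lines $X+i=0$ are invariant, each being a component of the level curve $\mathcal{F}=-i$ -- but no hunt for a clever $\lambda_0$ and no delicate degree bookkeeping are needed once the direct verification is done.
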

\begin{proof}
In this situation $X+i$, $i=1,\dots,d-1$, are irreducible Darboux polynomials. Thus 
$$\prod_{I} (X+i), \textrm{ where } I \textrm{ is a subset of } \{1, \dots, d-1\}$$
 is a Darboux polynomial. We have $2^{d-1}$ such Darboux polynomials and these polynomials have a degree smaller than $d-1$. Furthermore $\mathcal{F}$ is an irreducible Darboux polynomial and $\deg \mathcal{F}=d$. This gives the desired result.
\end{proof}

\begin{Rem}
In Lemma \ref{lem_exemple_expo}, we give a derivation with a rational first integral. It would be interesting to have the same kind of result with a derivation with no rational first integral.\\

In the situation of Lemma \ref{lem_exemple_expo}, $\mathcal{F}$  is an irreducible Darboux polynomial of degree $d$. Thus if we want to find all the irreducible Darboux polynomials, then we have to consider  polynomials $f$ with degree smaller or equal to $d$ in the polynomial system $D(f)=g.f$. Now, Lemma \ref{lem_exemple_expo} implies that 
\emph{we have at least $2^{d-1}+1$ solutions for the system  $D(f)=g.f$ with $\deg f\leq d$}.\\
Then we can conclude:  \emph{in the worst case the method of undetermined coefficients gives an exponential number, in $d$,  of reducible Darboux polynomials.}
\end{Rem}

The problem comes from  \emph{reducible} Darboux polynomials. The recombination of irreducible Darboux polynomials gives an exponential number of Darboux polynomials. We can ``avoid'' this problem. Indeed, we can add to the system $D(f)=g.f$, forms  $\varphi_{i,N}(f)$, $i=1,\dots,T$, such that $\varphi_{i,N}(f)=0$, for all $i=1, \dots, T$ if and only if $f$ is an absolutely reducible polynomials with degree $N$.
Such forms exist and they are called Noether's forms, see \cite{Ruppert,Schinzel}. Unfortunately, to author's knowledge, the number $T$ of these forms is exponential in $N$. More precisely, the number of Noether's forms is equal to $\binom{2N^2-3N+1}{N^2-1}$. With Stirling's formula we can show,  when $N$ tends to infinity:   
$$  T=\binom{2N^2-3N+1}{N^2-1} \sim \dfrac{2^{2N^2-3N+1}}{e^{9/4}\sqrt{\pi} N}.$$
Thus in this case we have an exponential number of equations, and then we get  a method with an exponential complexity in $N$.\\

We remark that there exists a strategy to compute the leading term of Darboux polynomials, see e.g.\cite{Christopher1994}. This strategy gives a net gain in practical examples, see \cite{ManMac}. However, this strategy do not detect the leading term of an irreducible Darboux polynomial. Then, even if we add this strategy to the method of undetermined coefficient, we still get an exponential number of Darboux polynomials.\\

The exponential complexity is related to the recombination of irreducible Darboux polynomials. Exponential complexity due to recombinations appears also when we study factorization algorithms. However, we can factorize polynomials in polynomial-time as mentioned in Section \ref{Complex_result}. In the following we  show that we can reduce the computation of irreducible Darboux polynomials to  the factorization of a bivariate polynomial. Then, we will deduce a polynomial-time algorithm for the computation of irreducible Darboux polynomials.

\section{The ecstatic curve}\label{Sect_ecstatic}
\begin{Def}
Let $D$ be a polynomial derivation, the $N$th ecstatic curve of $D$, $\mathcal{E}_{\mathcal{B},N}(D)$, is given by the polynomial
$$\det 
\left(
\begin{array}{cccc}
v_1 & v_2& \cdots& v_l\\
D(v_1)& D(v_2)& \cdots& D(v_l)\\
\vdots&\vdots& \cdots&\vdots\\
D^{l-1}(v_1)& D^{l-1}(v_2)& \cdots&D^{l-1}(v_l)
\end{array}
\right),
$$
where $\mathcal{B}=\{v_1,v_2,\dots,v_l\}$ is a basis of $\CC[X,Y]_{\leq N}$, the $\CC$-vector space of polynomials in $\CC[X,Y]$ of degree at most $N$, $l=(N+1)(N+2)/2$, and $D^{k}(v_i)=D\big(D^{k-1}(v_i)\big)$.\\
When $\mathcal{B}$ is the monomial basis, we denote by $\mathcal{E}_N(D)$ the ecstatic curve.
\end{Def}

Now, we remark that the ecstatic curve is independent of the chosen basis of $\CC[X,Y]_{\leq N}$ up to a multiplicative constant.

\begin{Prop}\label{chg_base}
Let $\mathcal{B}$ be a basis of $\CC[X,Y]_{\leq N}$. We have 
$$\mathcal{E}_{\mathcal{B},N}(D)=c.\mathcal{E}_N(D),$$
where $c \in \CC^{*}$.

\end{Prop}
\begin{proof}
We are going to show that $\mathcal{E}_{\mathcal{B},N}(D)$ is the determinant of an endomorphism. Consider
\begin{eqnarray*}
\mathcal{D}: \CC(X,Y)[U,V]_{\leq N}& \longrightarrow& \CC(X,Y)[U,V]_{\leq N}\\
U^iV^j& \longmapsto& \sum_{0 \leq k+l \leq N} D^{\textrm{cantor}(k,l)}(X^iY^j)U^kV^l
\end{eqnarray*}
where $U$ and $V$ are new independent variables, $\CC(X,Y)[U,V]_{\leq N}$ is the $\CC(X,Y)$ vector space of polynomials with coefficients in $\CC(X,Y)$, and with a degree relatively to  $U$ and $V$ at most $N$. Furthermore $\textrm{cantor}$ is the following map:
\begin{eqnarray*}
\textrm{cantor}: \NN^2& \longrightarrow & \NN\\
(k,l)& \longmapsto& \dfrac{(k+l)^2+3k+l}{2}.
\end{eqnarray*}

This application maps $(0,0)$ to 0, $(0,1)$ to 1,  $(1,0)$ to 2, $(0,2)$ to 3\dots That is to say,  the monomials $U^kV^l$ are ordered with a graded lexicographic order.

$\mathcal{D}$ is defined on a basis thus $\mathcal{D}$ is a well defined  endomorphism. $\mathcal{E}_N(D)$ is the determinant of this endomorphism written in the monomial basis.

Now we remark that: 
$$\mathcal{D}\Big(\sum_{0\leq i+j\leq N} p_{i,j} U^iV^j\Big)=\sum_{0\leq k+l \leq N}\Big( D^{\textrm{cantor}(k,l)}(\sum_{ 0 \leq i+j\leq N} p_{i,j}X^iY^j) \Big) U^kV^l,$$
where $p_{i,j} \in \CC$.\\
Each element of the basis $\mathcal{B}$ can be written $\sum_{i,j}p_{i,j}X^iY^j$ with $p_{i,j} \in \CC$.  We can also consider a basis $\mathcal{B}'$ of  the $\CC(X,Y)$ vector-space $\CC(X,Y)[U,V]_{\leq N}$, such that each element is written $\sum_{i,j}p_{i,j}U^iV^j$ with $p_{i,j} \in \CC$. Thus $\mathcal{E}_{\mathcal{B},N}(D)$ is the determinant of $\mathcal{D}$ written with $\mathcal{B}'$ in the domain, and  with the monomial basis in the target space. As $p_{i,j} \in \CC$, this gives the desired result.
\end{proof}

The following proposition is due to J.-V. Pereira, see \cite[Proposition 1]{Pereira2001}. It is the key point of our algorithm: it shows that the computation of Darboux polynomials can be reduced to the factorization of $\mathcal{E}_N(D)$. We give a proof in order to ease the readability of the paper.

\begin{Prop}\label{Pereira_Prop}
Every Darboux polynomial, relatively to $D$, of degree smaller than $N$ is a factor of $\mathcal{E}_N(D)$.
\end{Prop}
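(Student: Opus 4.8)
The plan is to show that any Darboux polynomial $f$ of degree at most $N$ forces the columns of the ecstatic matrix to be linearly dependent over $\CC(X,Y)$, so that the determinant $\mathcal{E}_N(D)$ vanishes on the curve $f=0$, and then conclude that $f$ divides $\mathcal{E}_N(D)$. By Proposition \ref{chg_base} we may freely pick a convenient basis $\mathcal{B}$ of $\CC[X,Y]_{\leq N}$, since changing the basis only rescales the ecstatic curve by a nonzero constant. The natural choice is a basis adapted to $f$: complete $f$ to a basis $\mathcal{B}=\{v_1,\dots,v_l\}$ of $\CC[X,Y]_{\leq N}$ with, say, $v_1=f$. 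This is possible because $\deg f \leq N$ and $l=(N+1)(N+2)/2=\dim_{\CC}\CC[X,Y]_{\leq N}$.

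\emph{Key step: iterated derivatives of $f$ stay in the ideal $(f)$.} If $D(f)=g\cdot f$ for some cofactor $g$, I would argue by induction that $D^{k}(f) \in (f)$ as an ideal of $\CC[X,Y]$ for every $k\geq 0$. Indeed $D$ is a derivation, so $D^{k}(f)=h_k\cdot f$ for polynomials $h_k$ satisfying the recursion $h_0=1$, $h_{k+1}=D(h_k)+g\,h_k$; concretely $D(h_k f)=D(h_k)f+h_k D(f)=\big(D(h_k)+g h_k\big)f$. Hence every entry in the first column of the ecstatic matrix (the column indexed by $v_1=f$) is divisible by $f$, i.e. $D^{k}(f)=h_k f$ with $h_k\in\CC[X,Y]$.

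\emph{Concluding via column operations / cofactor expansion.} Expanding the determinant along the first column, each of its $l$ entries is a multiple of $f$, so $f$ divides $\mathcal{E}_{\mathcal{B},N}(D)$ in $\CC[X,Y]$, and by Proposition \ref{chg_base} this means $f$ divides $c^{-1}\mathcal{E}_N(D)$ for a nonzero constant $c$, hence $f$ divides $\mathcal{E}_N(D)$. An equivalent formulation I might prefer for cleanliness: restrict the whole matrix modulo the ideal $(f)$; then the first column becomes zero, the determinant vanishes modulo $(f)$, and since $f$ has no lower-degree content issues (one can reduce to the irreducible case, as products of Darboux polynomials are Darboux by Proposition \ref{propsomcof}), divisibility follows.

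The main obstacle is making the divisibility rigorous rather than merely the set-theoretic vanishing on $V(f)$. Just knowing $\mathcal{E}_N(D)$ vanishes on the curve $f=0$ gives, via the Nullstellensatz, only that $f_{\mathrm{red}}$ (the radical, or each irreducible factor) divides $\mathcal{E}_N(D)$, not that the full multiplicity of $f$ divides it. The column-divisibility argument above sidesteps this: it shows $f$ itself (with multiplicity) divides the determinant, because every entry in the $f$-column is literally an element of the ideal $(f)$, and the determinant is a $\CC[X,Y]$-linear combination of those entries. The one point that needs care is that the chosen basis $\mathcal{B}$ has entries in $\CC[X,Y]$ so that $h_k\in\CC[X,Y]$ and the cofactor expansion takes place over the polynomial ring; this is guaranteed by completing $f$ to a \emph{polynomial} basis. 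With that in place the statement follows, and the reduction to irreducible $f$ via Proposition \ref{propsomcof} is not strictly needed but clarifies why ``factor'' is the right word.
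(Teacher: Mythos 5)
Your proof is correct and follows essentially the same route as the paper: complete $f$ to a basis with $v_1=f$ (justified by Proposition \ref{chg_base}), show by induction that $D^k(f)=h_k f$ with $h_k\in\CC[X,Y]$, and conclude that $f$ divides the determinant since every entry of the first column lies in the ideal $(f)$. One minor caveat: your opening sentence announces a linear-dependence/vanishing-on-$V(f)$ plan that would only yield divisibility by the radical, but the argument you actually execute is the column-divisibility one, which is exactly the paper's and correctly handles multiplicity.
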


\begin{proof}
Let $F \in \CC[X,Y]_{\leq N}$ be  a Darboux polynomial.
By Proposition  \ref{chg_base} we can choose a basis $\mathcal{B}$ where $v_1=F$.\\
Furthermore, we have:
\begin{eqnarray*}
D(F)&=&g_1 F,\\
D^2(F)&=&D(g_1 F)=\big(g_1^2 +D(g_1)\big)F=g_2 F,\\
&\vdots&\\
D^{l-1}(F)&=&g_{l-1} F,
\end{eqnarray*}
where $g_1$, $g_2$, \dots, $g_{l-1}$ are polynomials.\\
Thus $F$ is a factor of $\mathcal{E}_{\mathcal{B},N}(D)$ and this concludes the proof.
\end{proof}

\begin{Rem}
The converse is false. Indeed, consider the derivation $D=-2X^2\partial_X+(1-4XY)\partial_Y$. Then $\mathcal{E}_1(D)=YX^4$ but $Y$ is not a Darboux polynomial.
\end{Rem}

We know by Darboux's Theorem, see Theorem \ref{darbouxthm}, that if a derivation has a rational first integral then there are infinitely many irreducible Darboux polynomials. Thus if $D$ has a rational first integral then by Proposition \ref{Pereira_Prop}, $\mathcal{E}_N(D)$ has infinitely many irreducible factors. This gives $\mathcal{E}_N(D)=0$, and also $\mathcal{E}_M(D)=0$ for $M$  bigger than $N$. The following proposition says that the converse is also true. This proposition will be useful in Section \ref{Section_rat_first_int} when we will study the computation of rational first integrals.

\begin{Prop}\label{ecstatic_nulle}
We have $\mathcal{E}_N(D)=0$ and $\mathcal{E}_{N-1}(D) \neq 0$ if and only if $D$ admits a rational first integral of exact degree $N$.
\end{Prop}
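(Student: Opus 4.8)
The plan is to reinterpret $\mathcal{E}_N(D)$ as a Wronskian with respect to $D$ and to translate its vanishing into a linear dependence over the field of rational first integrals $C=\ker D=\{h\in\CC(X,Y)\mid D(h)=0\}$. Fixing the monomial basis $v_1,\dots,v_l$ of $\CC[X,Y]_{\leq N}$, the defining determinant is the generalized Wronskian $\det\big(D^{i-1}(v_j)\big)_{1\le i,j\le l}$, and I would invoke the classical Wronskian criterion of differential algebra: for a derivation with constant field $C$, this determinant vanishes if and only if $v_1,\dots,v_l$ are linearly dependent over $C$. Thus
$$\mathcal{E}_N(D)=0 \iff \CC[X,Y]_{\leq N}\ \text{is linearly dependent over } C.$$
One implication is then immediate, and is essentially the content of the remark preceding the statement: a reduced rational first integral $\mathcal{F}=p/q$ of degree $N$ gives the nontrivial $C$-relation $p-\mathcal{F}\cdot q=0$ between the degree-$\le N$ polynomials $p$ and $q$, so $\mathcal{E}_N(D)=0$; equivalently the pencil $p+\lambda q$ furnishes infinitely many irreducible Darboux polynomials of degree $\le N$, each a factor of $\mathcal{E}_N(D)$ by Proposition \ref{Pereira_Prop}. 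Since $\CC[X,Y]_{\leq N}\subseteq\CC[X,Y]_{\leq M}$, this also yields $\mathcal{E}_M(D)=0$ for every $M\ge N$.

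The substantive step is to pin the degree precisely. If $C\neq\CC$ then $C$ has transcendence degree $1$ over $\CC$ and is algebraically closed in $\CC(X,Y)$, so by Lüroth's theorem $C=\CC(\mathcal{F}_0)$ for a generator $\mathcal{F}_0=p/q$; such a generator is automatically non-composite and has minimal degree, say $n_0$, among non-constant first integrals. I would then show that for every $M<n_0$ the space $\CC[X,Y]_{\leq M}$ is linearly \emph{independent} over $C$, hence $\mathcal{E}_M(D)\neq0$. Because $\mathcal{F}_0$ is non-composite, finiteness of its spectrum $\sigma(p,q)$ gives that the fibre $\Gamma_c=\{p-cq=0\}$ is an irreducible reduced curve of degree $n_0$ for all but finitely many $c\in\CC$. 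For such generic $c$, the restriction map $\rho_c\colon\CC[X,Y]_{\leq M}\to\CC[\Gamma_c]$ has kernel equal to the degree-$\le M$ multiples of $p-cq$; since $\deg(p-cq)=n_0>M$, this kernel is $\{0\}$, so $\rho_c$ is injective and the $\rho_c(v_i)$ are independent. If now $\sum_i\theta_i v_i=0$ were a nontrivial $C$-relation, then, writing $\theta_i=\tilde u_i(\mathcal{F}_0)$ with $\tilde u_i\in\CC[T]$ not all zero (after clearing denominators), restriction to a generic $\Gamma_c$ replaces $\mathcal{F}_0$ by the constant $c$ on a dense open subset and produces $\sum_i\tilde u_i(c)\,\rho_c(v_i)=0$; injectivity forces $\tilde u_i(c)=0$ for all $i$, and letting $c$ vary over infinitely many values forces $\tilde u_i\equiv0$, a contradiction.

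Combining the two steps, the set $\{M:\mathcal{E}_M(D)=0\}$ is exactly $\{M\ge n_0\}$ when a rational first integral exists, and is empty otherwise; in particular the vanishing of $\mathcal{E}_M(D)$ flips exactly at $M=n_0$. Hence $\mathcal{E}_N(D)=0$ together with $\mathcal{E}_{N-1}(D)\neq0$ holds precisely when $N=n_0$, i.e. precisely when the minimal, hence exact, degree of a rational first integral equals $N$; for the forward implication one uses the Wronskian criterion to pass from $\mathcal{E}_N(D)=0$ to $C$-dependence, hence to $C\neq\CC$ and the existence of $\mathcal{F}_0$ with $n_0\le N$, while $\mathcal{E}_{N-1}(D)\neq0$ together with the easy implication excludes $n_0\le N-1$. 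I expect the main obstacle to be the second step, and specifically the control of the degree: the abstract Wronskian criterion only produces \emph{some} first integral (a ratio of Wronskian cofactors, whose degree I cannot bound a priori), so converting ``$\mathcal{E}_M(D)=0$'' into ``a first integral of degree $\le M$'' genuinely requires the geometric input that the minimal first integral is non-composite with irreducible generic fibre. A secondary point needing care is the passage from injectivity on a single generic fibre to global $C$-independence, which forces one to work over the base punctured at the finitely many $c$ for which $\Gamma_c$ fails to be irreducible and reduced.
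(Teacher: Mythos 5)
Your proof is correct, but it does not follow the paper's route for the simple reason that the paper has no route: the proof of Proposition \ref{ecstatic_nulle} is a bare citation of \cite[Theorem 1]{Pereira2001}. What you have written is in effect a self-contained reconstruction of the Lagutinskii--Pereira argument, and it closely parallels the proof the paper \emph{does} spell out for the variant statement, Proposition \ref{Prop_ecst_moins_zero}: there the linear dependence over the constants is extracted by the minimal-$k$ argument (the relations $\mathcal{N}_j$ together with $D(c_i)=0$), which is exactly the nontrivial direction of the Wronskian criterion you invoke as a black box, and the degree control is likewise obtained by restricting to a generic member $\lambda p+\mu q$ of the pencil, irreducible because the spectrum is finite (Proposition \ref{bornespectre}). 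Your packaging buys modularity: the differential-field Wronskian criterion cleanly separates ``$\mathcal{E}_M(D)=0$ iff $\CC[X,Y]_{\leq M}$ is dependent over $C=\ker D$'' from the geometry, and L\"uroth--Gordan (every transcendence-degree-one subfield of $\CC(X,Y)$ containing $\CC$ is simple) together with multiplicativity of degree under composition hands you a minimal non-composite generator $\mathcal{F}_0$ and hence the sharp threshold $n_0$, so that $\{M:\mathcal{E}_M(D)=0\}=\{M\geq n_0\}$ falls out in one stroke; the price is reliance on three standard but external facts (the Wronskian criterion over a differential field, L\"uroth--Gordan, and $\deg(u\circ h)=\deg u\cdot\deg h$) which you should cite, e.g. via \cite{Joua_Pfaff,Bodin} for the non-composite/spectrum material. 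You also correctly diagnosed the genuine difficulty --- the abstract Wronskian criterion alone yields a first integral of uncontrolled degree --- and your generic-fibre injectivity step is sound as written: for $c\notin\sigma(p,q)$ the vanishing ideal of $\Gamma_c$ is $(p-cq)$, so $\ker\rho_c\cap\CC[X,Y]_{\leq M}=\{0\}$ when $M<n_0$, the set $\Gamma_c\cap\{q\neq0\}$ is Zariski-dense in $\Gamma_c$ since $\gcd(p-cq,q)=1$, and varying $c$ over infinitely many good values kills the $\tilde u_i$. One further point in your favour: your reading of ``exact degree $N$'' as ``the minimal degree of a rational first integral is $N$'' is the right one, and your proof makes explicit a convention the paper leaves implicit --- under the liberal reading the right-to-left implication would fail, since $\mathcal{F}_0^2$ is a first integral of exact degree $2n_0$ while $\mathcal{E}_{2n_0-1}(D)=0$.
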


\begin{proof}
See \cite[Theorem 1]{Pereira2001}.
\end{proof}

 In our complexity study we will need to know the bit-size of $\mathcal{E}_N(D)$. We recall that the bit-size of $\mathcal{E}_N(D)$ is: $\big(\deg(\mathcal{E}_N(D)\big)^2 \log\big( \| \mathcal{E}_N(d)\|_{\infty} \big)$. Thus in the following we are going to compute the degree and the height of $\mathcal{E}_{N}(D)$.

\begin{Prop}\label{Deg_extactic}
Let $D=A(X,Y)\partial_X+B(X,Y)\partial_Y$ be a polynomial derivation, where $\deg A, \deg B \leq d$. The degree of $\mathcal{E}_N(D)$ is at most $N.l+(d-1).(l-1).l/2$, where $l=(N+1)(N+2)/2$.
\end{Prop}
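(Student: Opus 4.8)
The plan is to bound the degree of each entry of the matrix defining $\mathcal{E}_N(D)$ and then to combine these bounds through the expansion of the determinant. First I would record how the total degree behaves under a single application of $D$. For any $v \in \CC[X,Y]$ we have $D(v) = A\,\partial_X v + B\,\partial_Y v$, and since $\deg(A\,\partial_X v) \leq \deg A + \deg v - 1 \leq \deg v + (d-1)$, and likewise for the second summand, we obtain $\deg D(v) \leq \deg v + (d-1)$. Iterating this estimate gives $\deg D^k(v) \leq \deg v + k(d-1)$ for every integer $k \geq 0$. In particular, each element $v_i$ of a basis of $\CC[X,Y]_{\leq N}$ satisfies $\deg v_i \leq N$, so the entry sitting in row $k$ of the matrix (namely $D^k(v_i)$, for $k = 0, 1, \dots, l-1$) has degree at most $N + k(d-1)$.

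Next I would pass from the entries to the determinant via its Leibniz expansion. The determinant $\mathcal{E}_{\mathcal{B},N}(D)$ is a signed sum of products, each product being formed by selecting exactly one factor from each row $k = 0, 1, \dots, l-1$ and one from each column. Consequently the degree of any such product is at most the sum, over the rows, of the degree bounds found above, that is, at most $\sum_{k=0}^{l-1}\bigl(N + k(d-1)\bigr)$. Crucially this bound does not depend on which column is chosen in each row, so the same estimate applies to every term of the expansion, and therefore $\deg \mathcal{E}_{\mathcal{B},N}(D) \leq \sum_{k=0}^{l-1}\bigl(N + k(d-1)\bigr)$. By Proposition \ref{chg_base} the ecstatic curve is independent of the basis up to a nonzero scalar, so this is also a bound for $\deg \mathcal{E}_N(D)$.

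Finally I would evaluate the arithmetic sum: the constant part contributes $\sum_{k=0}^{l-1} N = N\,l$, while the linear part contributes $(d-1)\sum_{k=0}^{l-1} k = (d-1)\tfrac{(l-1)l}{2}$. Adding these yields $N\,l + (d-1)\tfrac{(l-1)l}{2}$, which is exactly the claimed bound. I do not expect any genuine obstacle in this argument; the only point that deserves care is the uniformity of the per-row estimate $N + k(d-1)$ across all columns, since it is precisely this uniformity that lets the row-by-row bound control every summand of the Leibniz expansion, and hence the degree of the whole determinant.
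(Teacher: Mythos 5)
Your proof is correct and follows essentially the same route as the paper: bound $\deg D^k(v_i) \leq N + k(d-1)$ by iterating the elementary estimate $\deg D(v) \leq \deg v + (d-1)$, then sum these row-wise bounds over the Leibniz expansion of the determinant and evaluate the arithmetic progression. Your remarks on the uniformity of the per-row bound across columns and on basis-independence via Proposition \ref{chg_base} simply make explicit what the paper's terser argument leaves implicit.
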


\begin{proof}
By definition of a determinant we have:
$$\deg \mathcal{E}_N(D) \leq \sum_{k=0}^{l-1} \deg\big(D^k(v_i)\big),$$
where $D^0(v_i)=v_i$.\\
A straightforward computation gives $\deg D^k(v_i) \leq k(d-1)+N$. Then 
\begin{eqnarray*}
\deg \mathcal{E}_N(D) &\leq& \sum_{k=0}^{l-1} \big(k(d-1)+N\big),\\
&\leq & N.l+(d-1)\sum_{k=0}^{l-1}k,\\
&\leq&N.l+(d-1).(l-1).l/2.
\end{eqnarray*}
This gives the desired result.
\end{proof}

\begin{Cor}\label{bigodegecstatic}
Under the hypothesis of Proposition \ref{Deg_extactic}, we have $\deg \mathcal{E}_N(D)$ belongs to $\bigO(dN^4)$.
\end{Cor}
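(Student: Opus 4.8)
The plan is to substitute the explicit value of $l$ into the degree bound from Proposition \ref{Deg_extactic} and then identify the dominant term in the two parameters $d$ and $N$. Since $l=(N+1)(N+2)/2$ is a polynomial of degree $2$ in $N$, we have $l \in \bigO(N^2)$, and likewise $l-1 \in \bigO(N^2)$. Everything then reduces to a routine asymptotic simplification of the closed form already established.

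Concretely, I would estimate the two summands of $N.l+(d-1).(l-1).l/2$ separately. The first term $N.l$ is the product of $N$ with a quantity in $\bigO(N^2)$, hence $N.l \in \bigO(N^3)$. The second term $(d-1).(l-1).l/2$ is the product of a factor in $\bigO(d)$ with two factors each in $\bigO(N^2)$, so it lies in $\bigO(dN^4)$. Adding the two and observing that $\bigO(N^3)\subseteq\bigO(dN^4)$ (for instance because $N^3\leq dN^4$ as soon as $d\geq 1$ and $N\geq 1$), the entire bound lies in $\bigO(dN^4)$, which is exactly the claim.

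There is no genuine obstacle here: the statement is purely an asymptotic repackaging of the exact inequality of Proposition \ref{Deg_extactic}. The only point worth a moment's care is that the bound depends on the two independent parameters $d$ and $N$, so one should confirm that the $\bigO(dN^4)$ contribution really dominates the $\bigO(N^3)$ contribution uniformly; this holds as soon as $d\geq 1$, which is the relevant regime for a derivation of degree $d$.
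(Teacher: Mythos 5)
Your proof is correct and follows the same (indeed, the only natural) route: the paper states this corollary without proof as an immediate consequence of Proposition \ref{Deg_extactic}, and your substitution of $l=(N+1)(N+2)/2\in\bigO(N^2)$ into the bound $N.l+(d-1).(l-1).l/2$, together with the observation that $\bigO(N^3)\subseteq\bigO(dN^4)$ for $d\geq 1$, is exactly the intended verification.
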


\begin{Prop}\label{height_ecstatic}
The height $\|\mathcal{E}_N(D)\|_{\infty}$ satisfies
$$\| \mathcal{E}_N(D)\|_{\infty} \leq \Big(2l\mathcal{H}\big(l(d-1)+N\big)^{3}\Big)^{l(l-1)/2}
.$$
\end{Prop}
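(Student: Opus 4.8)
The plan is to bound the two ingredients of the bit-size separately. Since Proposition \ref{Deg_extactic} already controls $\deg \mathcal{E}_N(D)$, it remains to bound the height, and I would do this by expanding the determinant with the Leibniz formula and controlling the growth of each matrix entry $D^k(v_i)$ as a function of $k$.

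First I would track the size of the entries. Rather than work directly with $\|\cdot\|_\infty$, it is cleaner to use the $\ell_1$-norm $\|f\|_1=\sum_{i,j}|f_{i,j}|$, which satisfies $\|f\|_\infty\le\|f\|_1$ and, crucially, is submultiplicative: $\|fg\|_1\le\|f\|_1\|g\|_1$. Each $v_i$ is a monomial, so $\|v_i\|_1=1$, and applying $D$ once gives $D(p)=A\,\partial_X p+B\,\partial_Y p$, whence $\|D(p)\|_1\le\|A\|_1\|\partial_X p\|_1+\|B\|_1\|\partial_Y p\|_1$. Since differentiation multiplies each coefficient by an exponent at most $\deg p$ and $\|A\|_1,\|B\|_1\le\binom{d+2}{2}\mathcal{H}$, one obtains a recursion of the shape $\|D^{k+1}(v_i)\|_1\le 2\binom{d+2}{2}\mathcal{H}\,\delta_k\,\|D^{k}(v_i)\|_1$, where $\delta_k=k(d-1)+N$ is the degree bound established in the proof of Proposition \ref{Deg_extactic}. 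Unrolling the recursion and using $\delta_k\le l(d-1)+N$ yields $\|D^{k}(v_i)\|_1\le C^{\,k}$ with $C=2\binom{d+2}{2}\mathcal{H}\bigl(l(d-1)+N\bigr)$.

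Next I would expand the determinant. Writing $M$ for the ecstatic matrix, $\mathcal{E}_N(D)=\sum_\sigma\mathrm{sgn}(\sigma)\prod_{k=0}^{l-1}D^{k}(v_{\sigma(k)})$, a sum of $l!$ terms, each picking one entry from every row $k=0,\dots,l-1$. Submultiplicativity of $\|\cdot\|_1$ then gives $\bigl\|\prod_k D^{k}(v_{\sigma(k)})\bigr\|_1\le\prod_{k=0}^{l-1}C^{\,k}=C^{\,l(l-1)/2}$, the exponent being exactly $\sum_{k=0}^{l-1}k$. Hence $\|\mathcal{E}_N(D)\|_\infty\le\|\mathcal{E}_N(D)\|_1\le l!\,C^{\,l(l-1)/2}$.

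Finally I would check that this is dominated by the stated bound $\bigl(2l\mathcal{H}(l(d-1)+N)^3\bigr)^{l(l-1)/2}$. Comparing bases, $C=2\binom{d+2}{2}\mathcal{H}\bigl(l(d-1)+N\bigr)\le\mathcal{H}\bigl(l(d-1)+N\bigr)^3$ because $\binom{d+2}{2}\le\bigl(l(d-1)+N\bigr)^2$ in the relevant range, so there is a spare factor of $2l$ per unit of the exponent; since $l(l-1)/2$ is at least of order $l$, the resulting slack $(2l)^{l(l-1)/2}$ comfortably absorbs the combinatorial prefactor $l!$. The main obstacle is precisely this constant-chasing at the end: the conceptual content (a recursive size bound for $D^k(v_i)$ combined with $\ell_1$-submultiplicativity in the Leibniz expansion) is routine, but one must verify that the generous constant $2l\mathcal{H}\bigl(l(d-1)+N\bigr)^3$ really swallows all accumulated factors—the $l!$ from the permutation sum, the binomial term-counts coming from multiplication by $A$ and $B$, and the differentiation factors—including the degenerate small values of $N$ and $d$.
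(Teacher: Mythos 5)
Your proof has the same skeleton as the paper's: an iterated size bound on the entries $D^k(v_i)$ with per-step growth governed by the degree bound $k(d-1)+N$, the Leibniz expansion of the determinant producing the exponent $\sum_{k=0}^{l-1}k=l(l-1)/2$, and absorption of the permutation factor $l!$ into $(2l)^{l(l-1)/2}$. Your one genuine variation, working with the submultiplicative $\ell_1$-norm instead of the height, is in fact a small gain in rigor: the paper's step $\|\mathcal{E}_N(D)\|_\infty \le \sum_{\sigma\in\mathfrak{S}_l}\prod_{i}\|D^{i}(v_{\sigma(i+1)})\|_\infty$ is not literally valid for $\|\cdot\|_\infty$ (the height of a product can exceed the product of the heights), and the paper compensates by folding factors of the form $(\delta+1)^2$ into its per-step constant $2\mathcal{H}\big(k(d-1)+N\big)^3$; with $\|\cdot\|_1$ the triangle inequality plus submultiplicativity make the analogous step exact, at the cost of the term-count $\binom{d+2}{2}$ entering your base $C$.

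That term-count is exactly where your endgame breaks in one corner. The asserted inequality $2\binom{d+2}{2}=(d+1)(d+2)\le\big(l(d-1)+N\big)^2$ is false for $d=1$ and $N\le 2$. At $(d,N)=(1,2)$ the overall bound still goes through on the $2l$ slack: with $m=l(d-1)+N$ you need $l!\le\bigl(2lm^2/\bigl((d+1)(d+2)\bigr)\bigr)^{l(l-1)/2}$, and the base equals $8$ there with $l=6$. But at $(d,N)=(1,1)$ your chain yields $l!\,C^{3}=6\,(6\mathcal{H})^3=1296\,\mathcal{H}^3$, which exceeds the stated bound $\bigl(2\cdot3\cdot\mathcal{H}\cdot1\bigr)^{3}=216\,\mathcal{H}^3$, and no slack remains since $m=1$. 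So you must dispose of $(d,N)=(1,1)$ separately — there $\mathcal{E}_1(D)=\pm\bigl(A\,D(B)-B\,D(A)\bigr)$, and $\|\mathcal{E}_1(D)\|_1\le 2\cdot 3\mathcal{H}\cdot 6\mathcal{H}^2=36\,\mathcal{H}^3\le 216\,\mathcal{H}^3$ — after which your argument does cover all $d\ge1$, $N\ge1$. To be fair, the paper's own proof has the same flavor of looseness at small parameters (its inequality $\bigl(N+k(d-1)+1\bigr)^2\le\bigl(N+k(d-1)\bigr)^3$ fails whenever $N+k(d-1)\le 2$, and $d=0$ is degenerate in both arguments), and since the proposition is only consumed through Corollary \ref{Cor-size} for asymptotic bit-size estimates, the corner is immaterial to the paper's conclusions; but the explicit check is needed if the stated constant is to hold verbatim.
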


\begin{proof}
First we recall that if $f_1$ and $f_2$ are two polynomials with total degree smaller than $d$ then we have
$$\|f_1 .f_2 \|_{\infty} \leq (d+1)^2 \|f_1\|_{\infty} \|f_2\|_{\infty}.$$
This gives for $f \in \CC[X,Y]_{\leq N}$,
$$\| D(f)\|_{\infty} \leq 2\mathcal{H}N^3\|f\|_{\infty}.$$
By induction, using $(N+k(d-1)+1)^2 \leq (N+k(d-1))^3$, we get 
$$\|D^i(f)\|_{\infty} \leq 2^i \mathcal{H}^i\Big(   \prod_{k=0}^{i-1} \big(k(d-1)+N\big)^3\Big) \|f\|_{\infty}, \textrm{ where } i \geq 1.$$
By definition of a determinant,  we have: 
$$\|\mathcal{E}_N(D)\|_{\infty} \leq \sum_{\sigma \in \mathfrak{S}_l} \prod_{i=0}^{l-1}\|D^{i}(v_{\sigma(i+1)})\|_{\infty},$$
where  $\{v_1,\dots,v_l\}$ is the monomial basis of $\CC[X,Y]_{\leq N}$.
We get then:
\begin{eqnarray*}
\| \mathcal{E}_N(D)\|_{\infty} & \leq & l! \prod_{i=1}^{l-1}  2^i \mathcal{H}^i\Big(   \prod_{k=0}^{i-1} \big(k(d-1)+N\big)^3\Big)\\
&\leq & l! \prod_{i=1}^{l-1}  2^i \mathcal{H}^i\Big(   \big(l(d-1)+N\big)^{3i}\Big)\\
&\leq &\Big(2l\mathcal{H}\big(l(d-1)+N\big)^{3}\Big)^{l(l-1)/2}.
\end{eqnarray*}
\end{proof}

\begin{Cor}\label{Cor-size}
Under the hypothesis of Proposition \ref{height_ecstatic}, the bit-size of $\mathcal{E}_N(D)$ belongs to $\bigO  \Big( \big(dN\log(\mathcal{H})\big)^{\bigO(1)}\Big)$.
\end{Cor}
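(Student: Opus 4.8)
The plan is to bound the bit-size $\big(\deg \mathcal{E}_N(D)\big)^2 \log\big(\|\mathcal{E}_N(D)\|_\infty\big)$ by combining the two preceding results, Corollary \ref{bigodegecstatic} and Proposition \ref{height_ecstatic}, and checking that each factor is polynomial in $d$, $N$ and $\log(\mathcal{H})$. The only genuinely new work is to extract the $\log$ of the height bound and verify it has the right growth; the degree factor is already handled, since Corollary \ref{bigodegecstatic} gives $\deg \mathcal{E}_N(D) \in \bigO(dN^4)$, hence $\big(\deg \mathcal{E}_N(D)\big)^2 \in \bigO(d^2 N^8)$, which is certainly of the form $\big(dN\log(\mathcal{H})\big)^{\bigO(1)}$.

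First I would recall $l=(N+1)(N+2)/2 \in \bigO(N^2)$, so that $l(l-1)/2 \in \bigO(N^4)$. Taking the logarithm of the bound in Proposition \ref{height_ecstatic} gives
$$
\log\big(\|\mathcal{E}_N(D)\|_\infty\big) \leq \frac{l(l-1)}{2}\Big(\log(2l) + \log(\mathcal{H}) + 3\log\big(l(d-1)+N\big)\Big).
$$
Here $\log(2l) \in \bigO(\log N)$ and $\log\big(l(d-1)+N\big) \in \bigO\big(\log(dN)\big)$, while the leading factor $l(l-1)/2$ is $\bigO(N^4)$. Thus the whole expression is bounded by $\bigO\big(N^4(\log(dN)+\log(\mathcal{H}))\big)$, which is polynomial in $d$, $N$ and $\log(\mathcal{H})$.

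Multiplying the two estimates, the bit-size is at most
$$
\bigO(d^2N^8) \cdot \bigO\big(N^4(\log(dN)+\log(\mathcal{H}))\big),
$$
and every factor appearing here is bounded by a fixed power of $dN\log(\mathcal{H})$ (absorbing the additive $\log(dN)$ into the product, since $\log(dN) \leq dN$). Hence the bit-size lies in $\bigO\Big(\big(dN\log(\mathcal{H})\big)^{\bigO(1)}\Big)$, as claimed.

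I do not expect any real obstacle: this is a routine asymptotic bookkeeping argument, and the two hard estimates (the degree and the height) have already been proved. The one point requiring a little care is that the height bound is doubly exponential-looking in its raw form (an exponent of $l(l-1)/2$), so one must be sure to take logarithms before declaring polynomiality — the \emph{height} itself is \emph{not} polynomially bounded, only its logarithm is, and it is precisely the logarithm that enters the bit-size.
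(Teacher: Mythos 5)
Your proof is correct and takes essentially the same route as the paper's: the paper likewise combines Corollary \ref{bigodegecstatic} with Proposition \ref{height_ecstatic} and bounds the bit-size by $(dN^4)^2\, l(l-1)\log\Big(2l\mathcal{H}\big(l(d-1)+N\big)^{3}\Big)$, which is exactly your estimate written compactly. Your closing caveat --- that it is the \emph{logarithm} of the height, not the height itself, that is polynomially bounded --- is precisely the point the paper's one-line computation implicitly relies on.
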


\begin{proof}
By Corollary \ref{bigodegecstatic} and Proposition \ref{height_ecstatic} the bit-size of $\mathcal{E}_N(D)$ is bounded by 
$$(dN^4)^2.l.(l-1).\log  \Big(2l\mathcal{H}\big(l(d-1)+N\big)^{3}\Big) .$$
This gives the desired result.
\end{proof}

\section{$D$ has a finite number of irreducible Darboux polynomials}\label{Section_comput_darboux}
If $D$ has a finite number of irreducible Darboux polynomials then the ecstatic curve $\mathcal{E}_N(D)$ is non-zero by Proposition \ref{ecstatic_nulle}. It follows that we can compute irreducible Darboux polynomials with a bivariate factorization algorithm thanks to Proposition \ref{Pereira_Prop}. It seems that Proposition \ref{Pereira_Prop} has been proved by M.N.~Lagutinskii and rediscovered by J.V.~Pereira, see \cite[Theorem 2]{Pereira2001}. Then we call ``Lagutinskii-Pereira's algorithm'' the following algorithm.\\

\textbf{\textsf{Lagutinskii-Pereira's algorithm}}\\
\textsf{Input:} A polynomial derivation $D=A(X,Y)\partial_X+B(X,Y)\partial_Y$, and $N$ an integer.\\
\textsf{Output:}  The finite set $S$ of all the absolute irreducible Darboux polynomials with degree smaller than $N$ or ``There exists an infinite number of irreducible Darboux polynomials''.\\
\begin{enumerate}
\item $S=\{\}$.
\item Compute $\mathcal{E}_N(D)$. 
\item If $\mathcal{E}_N(D)=0$ then Return  ``There exists an infinite number of irreducible Darboux polynomials'' else go to step \ref{stepcompdarboux}, end If.
\item \label{stepcompdarboux} Compute the set ${f_1, \dots,f_m}$ of all  absolutely irreducible factors of $\mathcal{E}_N(D)$  with degree smaller than $N$ .
\item For  $i:=1,\dots,m$ do: If $\gcd(f_i,D(f_i))=f_i$ then add $f_i$ to $S$, end If, end For.
\item Return $S$.
\end{enumerate}

\begin{Prop}
The Lagutinskii-Pereira's algorithm is correct.
\end{Prop}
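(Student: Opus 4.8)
The plan is to show that the algorithm issues the message ``There exists an infinite number of irreducible Darboux polynomials'' only when this is genuinely the case, and that whenever it returns a set $S$, this $S$ is exactly the (finite) set of absolutely irreducible Darboux polynomials of degree smaller than $N$. Since every branch of the algorithm is decided by the single test $\mathcal{E}_N(D)=0$, I would organize the proof around the two cases $\mathcal{E}_N(D)\neq 0$ and $\mathcal{E}_N(D)=0$, reducing each to the results already proved about the ecstatic curve.

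First I would treat the case $\mathcal{E}_N(D)\neq 0$, in which the algorithm reaches step \ref{stepcompdarboux} and outputs a finite set. Here two things must be verified: \emph{completeness}, that no absolutely irreducible Darboux polynomial of degree smaller than $N$ is omitted, and \emph{soundness}, that every polynomial placed in $S$ really is a Darboux polynomial. Completeness is Proposition \ref{Pereira_Prop}: any such Darboux polynomial divides $\mathcal{E}_N(D)$, hence occurs among the absolutely irreducible factors $f_1,\dots,f_m$. Soundness follows from the filtering test, since an irreducible $f$ is a Darboux polynomial precisely when $f\mid D(f)$, i.e. when $\gcd(f,D(f))=f$; thus the loop retains exactly the Darboux polynomials among the $f_i$. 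This filtering is essential and cannot be dropped, because by the Remark following Proposition \ref{Pereira_Prop} a factor of $\mathcal{E}_N(D)$ need not be a Darboux polynomial. Finally, the set so produced is finite because the nonzero polynomial $\mathcal{E}_N(D)$ has only finitely many irreducible factors, and Proposition \ref{Pereira_Prop} confines all relevant Darboux polynomials to this finite list.

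The remaining case $\mathcal{E}_N(D)=0$ is where the main obstacle lies: I must rule out the possibility that the ecstatic curve vanishes while only finitely many irreducible Darboux polynomials exist, so that the message is never false. The plan is to pass to the least index $M\le N$ with $\mathcal{E}_M(D)=0$; such an $M$ exists, and $M\ge 1$ because $\mathcal{E}_0(D)$ is a nonzero constant, so by minimality $\mathcal{E}_{M-1}(D)\neq 0$. Proposition \ref{ecstatic_nulle} then supplies a rational first integral $p/q$ of exact degree $M\le N$; being of least degree it is non-composite, so by Proposition \ref{bornespectre} the spectrum $\sigma(p,q)$ is finite. The pencil $\lambda p+\mu q$ then consists of Darboux polynomials, all but finitely many of which are absolutely irreducible, giving infinitely many irreducible Darboux polynomials and confirming the message. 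I would close by observing that these two cases are mutually exclusive and exhaustive and that each output is correct under its triggering condition, which establishes the correctness of the Lagutinskii-Pereira's algorithm.
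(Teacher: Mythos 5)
Your proof is correct and takes the same route as the paper, whose entire proof reads ``This is a straightforward consequence of Proposition \ref{Pereira_Prop}'': completeness of the factor list via Proposition \ref{Pereira_Prop}, with the $\gcd(f_i,D(f_i))=f_i$ test supplying soundness (necessary, as the paper's remark after Proposition \ref{Pereira_Prop} shows), is exactly the intended argument. Your treatment of the branch $\mathcal{E}_N(D)=0$ --- passing to the minimal index $M$ with $\mathcal{E}_M(D)=0$, invoking Proposition \ref{ecstatic_nulle}, deducing non-compositeness from minimality, and concluding via the finiteness of the spectrum (Proposition \ref{bornespectre}) --- is a sound and welcome expansion of details the paper leaves implicit in the discussion preceding Proposition \ref{ecstatic_nulle} and spells out only later, in the proof of Proposition \ref{Prop-rat-first-int-cor}.
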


\begin{proof}
This is a straightforward consequence of Proposition \ref{Pereira_Prop}.
\end{proof}

Now we can prove Theorem \ref{Thm1}.
\begin{proof}
The Lagutinskii-Pereira's algorithm is correct and works with the claimed complexity. Indeed, we can compute $\mathcal{E}_N(D)$ in polynomial-time because it is a determinant. Furthermore, by Corollary \ref{Cor-size} we know that the bit-size of $\mathcal{E}_N(D)$ belongs to $\bigO \Big( \big(dN\log(\mathcal{H})\big)^{\bigO(1)}\Big)$. Then we can compute the absolutely irreducible factors of $\mathcal{E}_N(D)$ with $\bigO\Big( \big(dN\log(\mathcal{H})\big)^{\bigO(1)}\Big)$ binary  operations, see Section \ref{Complex_result}. As gcd computations can also be  performed in polynomial-time we obtain the desired result.
\end{proof}

We also deduce:
\begin{Cor}
Under conditions of Theorem \ref{Thm1}, if there exists an integrating factor $R$ such that $R=\prod_i f_i^{n_i}$, where $f_i$ are Darboux polynomials with degree smaller than $N$, then we can compute $R$ with  $\bigO\Big( \big(dN\log(\mathcal{H})\big)^{\bigO(1)}\Big)$ binary  operations.
 \end{Cor}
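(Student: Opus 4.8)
The plan is to reduce the computation of $R$ to the resolution of a single linear system, exactly as in Step \ref{step4} of the Prelle-Singer's method, and then invoke Theorem \ref{Thm1} together with the polynomial-time linear algebra recalled in Section \ref{Complex_result}. The key observation is a cofactor computation. If $f$ is a Darboux polynomial with cofactor $g$, that is $D(f)=g.f$, then $D(f^n)=n\,g\,f^n$, so $f^n$ has cofactor $n.g$; combining this with Proposition \ref{propsomcof}, any product $R=\prod_i f_i^{n_i}$ of Darboux polynomials with cofactors $g_i$ satisfies $D(R)=\big(\sum_i n_i g_i\big)R$. Hence $R$ is an integrating factor, i.e. $D(R)=-\dv(A,B)R$, if and only if the $n_i$ satisfy
$$\sum_i n_i\, g_i = -\dv(A,B).$$
Since $g_1,\dots,g_m$ and $\dv(A,B)$ are fixed polynomials of degree at most $d-1$, this is a \emph{linear} system in the unknown constants $n_i$, obtained by matching coefficients.

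The algorithm I would carry out is therefore the following. First, use the Lagutinskii-Pereira's algorithm (Theorem \ref{Thm1}) to compute the finite set $\{f_1,\dots,f_m\}$ of all irreducible Darboux polynomials of degree smaller than $N$, in $\bigO\big((dN\log(\mathcal{H}))^{\bigO(1)}\big)$ binary operations; any $f_i$ occurring in the stated factorization of $R$ splits into such irreducible factors by Proposition \ref{propsomcof}, so it suffices to work with the irreducible ones. Next, compute each cofactor $g_i=D(f_i)/f_i$ by a single exact polynomial division, each $g_i$ being a polynomial of degree at most $d-1$. Then form the matrix whose columns are the coefficient vectors of the $g_i$ (indexed by the monomials of degree at most $d-1$, so $\bigO(d^2)$ rows) and whose right-hand side is the coefficient vector of $-\dv(A,B)$, and solve the resulting $\QQ$-linear system. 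If it has a solution $(n_i)$, output $R=\prod_i f_i^{n_i}$ in factored form (a list of pairs $(f_i,n_i)$); otherwise no such integrating factor exists.

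For the complexity I would argue as follows. The number $m$ of irreducible factors is bounded by $\deg \mathcal{E}_N(D)\in\bigO(dN^4)$ (Corollary \ref{bigodegecstatic}), hence is polynomial in the parameters; the $f_i$, being factors of $\mathcal{E}_N(D)$, have height polynomially bounded by the bit-size of $\mathcal{E}_N(D)$ via Mignotte's bound, and this bit-size is itself polynomial by Corollary \ref{Cor-size}. Consequently the cofactors $g_i$ and all entries of the linear system have polynomial bit-size, so by the results of Section \ref{Complex_result} the system can be built and solved over $\QQ$ in $\bigO\big((dN\log(\mathcal{H}))^{\bigO(1)}\big)$ binary operations, and the solution $(n_i)$ itself has polynomial bit-size by the Hadamard/Cramer bounds. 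This yields the claimed total complexity.

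The main obstacle is bookkeeping on sizes rather than any conceptual difficulty: one must confirm that $m$, the heights of the $f_i$ and $g_i$, the dimensions and entry-sizes of the linear system, and the height of its solution all stay polynomial in $d$, $N$ and $\log(\mathcal{H})$. Each of these is controlled by the degree and bit-size estimates for $\mathcal{E}_N(D)$ (Corollaries \ref{bigodegecstatic} and \ref{Cor-size}) combined with standard factor-height and linear-system bounds, so the argument reduces to assembling these estimates; the only genuine care needed is ensuring the factor heights inherited from $\mathcal{E}_N(D)$ do not blow up under the polynomial divisions producing the $g_i$, which follows from the same degree and height bounds.
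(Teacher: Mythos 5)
Your proposal is correct and takes essentially the same route as the paper: compute the irreducible Darboux polynomials of degree smaller than $N$ with the Lagutinskii-Pereira's algorithm (Theorem \ref{Thm1}), then solve the linear system $\sum_i n_i g_i = -\dv(A,B)$ on the cofactors, exactly as in Step \ref{step4} of the Prelle-Singer's method. The size bookkeeping you supply (bounding $m$, the heights of the $f_i$ and $g_i$, and the solution via Corollaries \ref{bigodegecstatic} and \ref{Cor-size} together with standard factor-height and Cramer bounds) is left implicit in the paper's two-line proof but is precisely the verification needed.
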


\begin{proof}
We compute Darboux polynomials with the Lagutinskii-Pereira's algorithm and then we solve a linear system as in Step \ref{step4} of the Prelle-Singer's method, see Section \ref{Section_classic}.
\end{proof}

This corollary implies that we can compute an integrating factor corresponding to an elementary first integral with the Prelle-Singer's method with  $\bigO\Big( \big(dN\log(\mathcal{H})\big)^{\bigO(1)}\Big)$ binary  operations.

\section{$D$ has a rational first integral}\label{Section_rat_first_int}
We have seen in Section \ref{Sect_ecstatic} that is easy to test if a derivation has a rational first integral with degree smaller than $N$. Indeed, we just have to compute the ecstatic curve $\mathcal{E}_N(D)$ and  check if it is the zero polynomial. In this section, we show how we can compute this rational first integral.
\subsection{Computation of a Darboux polynomials with degree $N$}
 In this section we suppose that $D$ has a rational first integral $p/q$ with degree $N$. Then $p$ and $q$ are Darboux polynomials with degree $N$. By Proposition \ref{ecstatic_nulle}, $\mathcal{E}_N(D)=0$. Thus we cannot compute $p$ and $q$ as factors of $\mathcal{E}_N(D)$. The strategy is then the following: compute one Darboux polynomial with degree $N$, compute its cofactor, and then deduce $p$ and $q$.\\
Now we explain how we can compute a Darboux polynomial with degree $N$.
\begin{Def}
Let $D$ be a polynomial derivation, $\mathcal{E}_{N,0}(D)$, is  the polynomial $\mathcal{E}_{\mathcal{B}^0,N}(D)$ where $\mathcal{B}^0$ is the monomial basis of  the $\CC$-vector space of polynomials in $\CC[X,Y]$ of degree at most $N$ with constant term equal to zero.
\end{Def}
We have the following property.

\begin{Prop}\label{Prop_ecst_moins_zero}
Let $p/q$ be  a non-composite rational first integral of $D$, such that $\deg(p/q)=N$, and $p(0,0)q(0,0) \neq 0$.
\begin{enumerate}
\item We have $\mathcal{E}_{N,0}(D) \neq 0$ in $\QQ[X,Y]$.
\item If we set $(\lambda_0,\mu_0)=\big(-q(0,0),p(0,0)\big)$ then $\lambda_0 p + \mu_0 q$ is a factor of $\mathcal{E}_{N,0}(D)$.
\end{enumerate}
\end{Prop}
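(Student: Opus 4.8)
The plan is to treat the two assertions separately, handling the divisibility statement (2) first since it is a direct adaptation of the proof of Proposition \ref{Pereira_Prop}, and then the nonvanishing (1), which is the real difficulty.

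For (2), set $F = \lambda_0 p + \mu_0 q = -q(0,0)\,p + p(0,0)\,q$. Its value at the origin is $-q(0,0)p(0,0) + p(0,0)q(0,0) = 0$, so $F$ lies in the span of $\mathcal{B}^0$; moreover $F \neq 0$, since $F=0$ would force $p/q$ to be constant, contradicting $\deg(p/q)=N$. Because $p/q$ is a first integral, $D(p)q = pD(q)$, and coprimality of $p,q$ gives $D(p)=gp$ and $D(q)=gq$ with a \emph{common} cofactor $g$; hence $F$ is a Darboux polynomial and $D^k(F)=g_kF$ for suitable polynomials $g_k$. The determinant $\mathcal{E}_{\mathcal{B},N}(D)$ built on a subspace is, exactly as in Proposition \ref{chg_base}, independent of the chosen basis up to a nonzero constant: a change of basis of the subspace acts on the columns by a scalar matrix $P$ and multiplies the determinant by $\det P$. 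I would therefore compute $\mathcal{E}_{N,0}(D)$ in a basis of $\mathrm{span}(\mathcal{B}^0)$ whose first vector is $F$; then $F$ factors out of the first column, so $F$ divides the determinant, which proves (2).

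For (1), the key is the Wronskian principle underlying Pereira's results: for the derivation $D$ on $\CC(X,Y)$, whose field of constants $\ker D$ is the field of rational first integrals, the generalized Wronskian $\mathcal{E}_{\mathcal{B}^0,N}(D)$ vanishes if and only if the monomials $\{X^iY^j : 1 \le i+j \le N\}$ are linearly dependent over $\ker D$. Since $p/q$ is non-composite, Lüroth's theorem together with the relative algebraic closedness of $\ker D$ in $\CC(X,Y)$ gives $\ker D = \CC(h)$, where $h := p/q$. So I must show these monomials are independent over $\CC(h)$. Suppose not: there is a nontrivial relation $\sum_{1 \le i+j \le N} c_{ij}(h)\,X^iY^j = 0$ with $c_{ij}\in\CC(h)$. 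Clearing denominators and grouping by powers of $h$ yields $\sum_k h^k g_k = 0$ with $g_k \in \CC[X,Y]_{\le N}$ and $g_k(0,0)=0$ (all monomials have degree $\geq 1$), not all $g_k$ zero. Let $M$ be the largest index with $g_M\neq 0$; multiplying by $q^M$ gives $\sum_{k \le M} g_k p^k q^{M-k}=0$ in $\CC[X,Y]$. Assuming $\deg q = N$ (at least one of $\deg p, \deg q$ equals $N$; the case $\deg p = N$ is symmetric, using the \emph{lowest} index instead), every term with $k<M$ is divisible by $q$, so $q \mid g_M p^M$; coprimality forces $q \mid g_M$, and since $\deg g_M \le N = \deg q$ we get $g_M = c\,q$ for some constant $c$. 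Evaluating at the origin, $0 = g_M(0,0) = c\,q(0,0)$ with $q(0,0)\neq 0$ gives $c=0$, i.e. $g_M = 0$, contradicting the choice of $M$. Hence no such relation exists and $\mathcal{E}_{N,0}(D)\neq 0$.

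The main obstacle is the reduction in (1): isolating the Wronskian characterization of vanishing and the identification $\ker D = \CC(h)$. Once the statement is rephrased as linear independence of the degree-$\geq 1$ monomials over the field of first integrals, the coprimality-and-degree argument closes it cleanly, and the divisibility in (2) follows the template of Proposition \ref{Pereira_Prop} verbatim. I would also verify carefully that the symmetric lowest-index variant genuinely covers the remaining case $\deg p = N$, $\deg q < N$.
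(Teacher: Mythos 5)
Your proposal is correct, and while your part (2) coincides with the paper's proof --- both observe that $F=\lambda_0 p+\mu_0 q$ is a nonzero Darboux polynomial (common cofactor of $p$ and $q$) lying in $\CC[X,Y]_{\leq N,0}$, invoke basis-independence of the determinant as in Proposition \ref{chg_base} restricted to this subspace, and factor $F$ out of the column $v_1=F$ as in Proposition \ref{Pereira_Prop} --- your part (1) takes a genuinely different route. The paper (following Theorem 5.3 and Proposition 5.2 of \cite{Christopher_Llibre_Pereira}) never identifies $\ker D$: it takes a minimal dependence $\sum_i c_i D^j(v_i)=0$ among the columns and applies $D$ to conclude $D(c_i)=0$ --- which is exactly an inlined proof of the hard direction of the Wronskian criterion you invoke as a black box --- and then, instead of your L\"uroth step, uses finiteness of the spectrum (Proposition \ref{bornespectre}) to pick $(\lambda:\mu)\notin\sigma(p,q)\cup\{(\lambda_0:\mu_0)\}$: restricting the first integrals $c_i$ to the absolutely irreducible curve $\lambda p+\mu q=0$ makes them complex constants, yielding a nonzero element of $\CC[X,Y]_{\leq N,0}$ divisible by $\lambda p+\mu q$, hence equal to $c(\lambda p+\mu q)$ by degree and irreducibility, contradicting $(\lambda p+\mu q)(0,0)\neq 0$. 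You replace the spectrum argument by the field-theoretic fact $\ker D=\CC(p/q)$ (relative algebraic closedness of the constants, the transcendence-degree-one L\"uroth theorem, and non-compositeness --- essentially the content of \cite{Ollagnier}, which the paper cites only later, for Lemma \ref{lem_dimker2}) and close with bare polynomial arithmetic: extreme coefficient, coprimality of $p$ and $q$, the degree bound, and evaluation at the origin; your two-case split is exhaustive since $\max(\deg p,\deg q)=N$, and the lowest-index variant does handle $\deg p=N$. The trade-off: your endgame is more elementary and avoids Ruppert's bound entirely, but it rests on two classical theorems taken on faith --- the Wronskian criterion over the constants of a characteristic-zero differential field (Kolchin--Kaplansky; cite it explicitly, since the vanishing of $\mathcal{E}_{\mathcal{B}^0,N}(D)$ is literally such a Wronskian) and the L\"uroth-type description of $\ker D$ --- whereas the paper re-derives the first by hand and never needs the second. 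One wording fix in (2): the change of basis acts by an arbitrary invertible matrix with entries in $\CC$, not a ``scalar matrix''; the relevant point is that constant entries commute with $D$ applied row by row, so the determinant is multiplied by a nonzero constant.
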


\begin{proof}
This proof follows very closely the proof of Theorem 5.3 and Proposition 5.2 in \cite{Christopher_Llibre_Pereira}.\\
First, we prove that $\mathcal{E}_{N,0}(D)\neq0$.\\
We suppose the converse:  $\mathcal{E}_{N,0}(D)=0$ and we show that it is absurd.\\
If $\mathcal{E}_{N,0}(D)=0$ then the columns of the matrix are linearly dependent. Hence there are rational functions $c_i(X,Y) \in \QQ(X,Y)$ such that 
\begin{equation}\label{Nj}
\mathcal{N}_j:=\sum_{i=1}^kc_iD^j(v_i)=0, \, j=0,\dots,k-1
\end{equation}
with $k=(N+1)(N+2)/2-1$. Now, take $k$ to be the smallest value such that for $i=1,\dots,k$  there exists rational functions $c_i$,  not all zero, and $v_i \in \CC[X,Y]_{\leq N,0}$, linearly independent over $\QQ$ such that equalities (\ref{Nj}) holds. It is clear that $k>1$.\\
We have:
$$D(\mathcal{N}_j)-\mathcal{N}_{j+1}=\sum_{i=1}^kD(c_i)D^j(v_i)=0, \, j=0,\dots,k-2,$$
and so from the minimality of $k$, we see that the terms $D(c_i)$ must all vanish. Hence, each of the $c_i$ are either rational first integrals or constants.\\
Now, we consider the polynomial  $G=\lambda p + \mu q$ where $(\lambda:\mu)$ does not belongs to $\sigma(p,q) \cup \{(\lambda_0:\mu_0)\}$.  This choice is possible because by Proposition \ref{bornespectre}, $\sigma(p,q)$ is finite. Then $G$ is absolutely irreducible, $\deg G=N$ and $G(0,0)\neq 0$.  We also remark that $c_i$ are constants for all $(x,y)$ such that $G(x,y)=0$, because $p/q$ is a first integral. We denote by $c_i(\lambda,\mu)$ these constants. This gives
$$\mathcal{N}_0^{\lambda,\mu}:=\sum_{i=1}^k c_i(\lambda,\mu) v_i \neq 0, \textrm{ and } G=\lambda p + \mu q \textrm{ divides } \mathcal{N}_0^{\lambda,\mu}.$$
Indeed, $v_i$ are linearly independent,  and furthermore if $G(x,y)=0$ then $\mathcal{N}_0^{\lambda,\mu}(x,y)=\mathcal{N}_0(x,y)=0$.
Thus, there exists $c\in \QQ$ such that $c.G=\mathcal{N}_0^{\lambda,\mu}$. As by construction, $\mathcal{N}_0^{\lambda,\mu} \in \CC[X,Y]_{\leq N,0}$, we deduce that $G$ belongs to $\CC[X,Y]_{\leq N,0}$ and this contradicts  $G(0,0) \neq 0$.  Thus we obtain $\mathcal{E}_{N,0}(D) \neq 0$.\\

Second, we prove that $F=\lambda_0 p + \mu_0 q$  divides $\mathcal{E}_{N,0}(D)$.\\
$F$ is a Darboux polynomial and belongs to $\CC[X,Y]_{\leq N,0}$.
Since $\mathcal{E}_{N,0}(D)$ is independent of the chosen basis (with the same arguments used in Proposition \ref{chg_base}), we can choose a basis where $v_1=F$. Then as in Proposition \ref{Pereira_Prop} we deduce that
 $F$ is a factor of $\mathcal{E}_{N,0}(D)$ and this concludes the proof.
\end{proof}

\subsection{Computation of a rational first integral}
Thanks to  Proposition \ref{Prop_ecst_moins_zero}, we can describe an algorithm which computes a rational first integral.\\

We denote by $(x_k,y_k)$, $k=1,\dots,N^6$, the points in $S\times S$ where\\ \mbox{$S=\{0,\dots, N^3-1\}$} and by $D_k$ the following derivation:
 $$D_k=A(X+x_k,Y+y_k)\partial_X+B(X+x_k,Y+y_k)\partial_Y.$$
Let $g \in \QQ[X,Y]$ with degree smaller than $d$, $\mathcal{L}_{k,g}$ is the following linear map:
\begin{eqnarray*}
\mathcal{L}_{k,g}:\QQ[X,Y]_{\leq N}& \longrightarrow& \QQ[X,Y]_{\leq N+d-1}\\
f&\longmapsto& D_k(f)-g.f
\end{eqnarray*}

\begin{Lem}\label{lem_dimker2}
Suppose that $D_k$ has a rational first integral $p/q$ of degree $N$. We denote by $g$ the cofactor of $p$ and $q$. \\
Then $\dim_{\QQ} \ker \mathcal{L}_{k,g}=2$ and if we denote by $\{\tilde{p},\tilde{q}\}$ a basis of $\ker \mathcal{L}_{k,g}$ then $\tilde{p}/\tilde{q}$ is a rational first integral of $D_k$.
\end{Lem}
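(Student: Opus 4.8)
The plan is to prove the easy inequality first and then reduce the whole statement to a structural description of $\ker\mathcal{L}_{k,g}$. Since $p/q$ is a first integral with cofactor $g$, we have $D_k(p)=g\,p$ and $D_k(q)=g\,q$, so both $p$ and $q$ lie in $\ker\mathcal{L}_{k,g}$; as $\deg(p/q)=N\geq 1$ the function $p/q$ is non-constant, hence $p$ and $q$ are $\QQ$-linearly independent and $\dim_{\QQ}\ker\mathcal{L}_{k,g}\geq 2$. The real content is the reverse inequality, i.e.\ that \emph{every} nonzero $f\in\ker\mathcal{L}_{k,g}$ is a $\QQ$-linear combination of $p$ and $q$. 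The entry point is the observation that if $D_k(f)=g\,f$, then $D_k(f/q)=\big(D_k(f)q-fD_k(q)\big)/q^{2}=0$, so $f/q$ is itself a rational first integral of $D_k$.

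The key step, and the one I expect to be the main obstacle, is to identify the field $K=\{h\in\QQ(X,Y):D_k(h)=0\}$ of rational first integrals with $\QQ(p/q)$. First one checks that $K$ is algebraically closed in $\QQ(X,Y)$: if $h\in\QQ(X,Y)$ satisfied a minimal polynomial relation with coefficients in $K$, then applying $D_k$ and using that the characteristic is $0$ would force $D_k(h)=0$. This is where non-compositeness enters, through the standard equivalence that $p/q$ is non-composite if and only if $\QQ(p/q)$ is algebraically closed in $\QQ(X,Y)$. One then has a tower $\QQ(p/q)\subseteq K$ of two fields of transcendence degree one, both algebraically closed in $\QQ(X,Y)$, which collapses to $\QQ(p/q)=K$ (any element of $K$ is algebraic over $\QQ(p/q)$ and lies in $\QQ(X,Y)$, hence already in $\QQ(p/q)$). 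Non-compositeness is genuinely needed here: if $p/q=(p_0/q_0)^{2}$ with $p_0/q_0$ non-composite, the kernel also contains $p_0q_0$ and has dimension $3$. This causes no trouble in the algorithm, since $N$ is the exact degree of a first integral by Proposition \ref{ecstatic_nulle}, and a first integral of minimal degree is automatically non-composite. Consequently $f/q=u(p/q)$ for some $u\in\QQ(T)$.

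It then remains a short degree bookkeeping to force $u$ to be affine. Since $f$ is a polynomial, $f/q$ has poles only along $\{q=0\}$; a finite pole $\tau$ of $u$ would produce a pole of $u(p/q)$ along $\{p-\tau q=0\}$, and because $\gcd(p,q)=1$ this curve is not contained in $\{q=0\}$, a contradiction. Hence $u\in\QQ[T]$, say $u=\sum_{i=0}^{m}a_iT^{i}$ with $a_m\neq 0$. Writing $q^{\,m-1}f=\sum_{i=0}^{m}a_i\,p^{i}q^{\,m-i}\equiv a_mp^{m}\pmod{q}$ and using $\gcd(p,q)=1$ together with $\deg q=N\geq 1$, divisibility by $q^{\,m-1}$ forces $m\leq 1$. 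Therefore $f=a_1p+a_0q$, which proves $\ker\mathcal{L}_{k,g}=\QQ p+\QQ q$ and $\dim_{\QQ}\ker\mathcal{L}_{k,g}=2$.

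Finally, for any basis $\{\tilde p,\tilde q\}$ of this kernel, both $\tilde p$ and $\tilde q$ are Darboux polynomials with the same cofactor $g$, so $D_k(\tilde p/\tilde q)=\big(g\tilde p\tilde q-\tilde p\,g\tilde q\big)/\tilde q^{2}=0$; moreover $\tilde p/\tilde q$ is non-constant because $\tilde p$ and $\tilde q$ are linearly independent. Hence $\tilde p/\tilde q$ is a rational first integral of $D_k$, which completes the proof.
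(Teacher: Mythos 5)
Your proof is correct in substance but takes a genuinely different route from the paper's. The paper (following the first part of the proof of Theorem 6 in \cite{Ollagnier}) invokes Darboux's Theorem \ref{darbouxthm}: every $f\in\ker\mathcal{L}_{k,g}$ is either of the form $\alpha p+\beta q$ or a lower-degree divisor of such a combination with $(\alpha:\beta)\in\sigma(p,q)$; since the spectrum is finite, the kernel is a finite union $\mathrm{Span}(p,q)\cup \mathrm{Span}(f_1)\cup\dots\cup \mathrm{Span}(f_m)$, and a $\QQ$-vector space over an infinite field cannot be a finite union of proper subspaces, so $\ker\mathcal{L}_{k,g}=\mathrm{Span}(p,q)$. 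You instead work with the field of constants $K=\{h\in\QQ(X,Y)\mid D_k(h)=0\}$, identify $K=\QQ(p/q)$ via algebraic closedness and non-compositeness, and then do explicit pole/degree bookkeeping to force $f/q=u(p/q)$ with $u$ affine. Your route avoids the spectrum and Ruppert's bound (Proposition \ref{bornespectre}) entirely, at the price of the standard equivalence between non-compositeness of $p/q$ and $\QQ(p/q)$ being algebraically closed in $\QQ(X,Y)$ --- which is essentially the content of the very paper \cite{Ollagnier} that the published proof cites, so the two arguments are cousins rather than strangers.

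Your observation that non-compositeness is a hidden hypothesis is correct and is the most valuable part of your write-up: your example $p/q=(p_0/q_0)^2$ is valid (with $p=p_0^2$, $q=q_0^2$, the polynomial $p_0q_0$ is a third, independent kernel element with the same cofactor $2g_0$), so the lemma is false as literally stated. The paper's own proof uses the hypothesis silently in the phrase ``this union is finite because $\sigma(p,q)$ is finite'', since finiteness of the spectrum is \emph{equivalent} to non-compositeness. As you note, this is harmless where the lemma is used: in \textsf{Rat-First-Int} the integer $n$ is minimal, and the proof of Proposition \ref{Prop-rat-first-int-cor} establishes non-compositeness there before the lemma is invoked.

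Two small loose ends in your bookkeeping, both easily patched and neither fatal. First, your pole argument tacitly assumes $p-\tau q$ is non-constant: if $p-\tau q=c\in\QQ^{*}$ (a degree-dropping member of the pencil, which is precisely why the paper's definition of $\sigma(p,q)$ includes such pairs) the curve $\{p-\tau q=0\}$ is empty and no affine pole arises; in that case $p/q=\tau+c/q$ forces $D_k(q)=0$, so $q$ is a polynomial first integral and you can rerun your argument with $q$ in place of $p/q$. Second, your divisibility step concludes $m\leq 1$ from $q\mid a_mp^m$ using $\deg q=N$, but the hypothesis only gives $\max(\deg p,\deg q)=N$; if $q$ is non-constant the argument stands, and if $q$ is constant then $f=u(p)$ with $u$ a polynomial and $\deg f=m\deg p=mN\leq N$ gives $m\leq 1$ directly.
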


\begin{proof}
This proof follows very closely the first part of the proof of Theorem 6 in \cite{Ollagnier}.\\
If $f \in \ker \mathcal{L}_{k,g}$ then by Darboux's theorem, see Theorem  \ref{darbouxthm}, there exist $\alpha$ and $\beta$ in $\QQ$ such that $f=\alpha p + \beta q$ or $f$ has degree less than $N$ and divides $\alpha p + \beta q$. The last case implies that $(\alpha,\beta) \in \sigma(p,q)$. We deduce that:
\begin{equation}\label{star}
\ker \mathcal{L}_{k,g}=Span(p,q) \cup Span (f_1)\cup \dots \cup Span(f_m),
\end{equation}
where $f_i$ is a factor of $\alpha p+ \beta q$, $(\alpha, \beta) \in \sigma(p,q)$, and the cofactor of $f_i$ is equal to $g$. Remark that this union is finite because $\sigma(p,q)$ is finite.\\
As $\QQ$ is infinite, equality (\ref{star}) implies $\ker \mathcal{L}_{k,g}=Span(p,q)$.
\end{proof}

\textbf{\textsf{Algorithm Rat-First-Int}}\\
\textsf{Input:} A polynomial derivation $D=A(X,Y)\partial_X+B(X,Y)\partial_Y$, and $N$ an integer.\\
\textsf{Output:} A rational first integral with degree smaller than $N$ or ``There exists no rational first integral with degree smaller than $N$''.
\begin{enumerate}
\item \label{rfi1}Compute $\mathcal{E}_N(D)$.
\item \label{rfi2}If $\mathcal{E}_N(D)\neq 0$ then Return  ``There exists no rational first integral with degree smaller than $N$'', else go to step \ref{rfi3}, end If.
\item \label{rfi3} Compute the smallest integer $n$ such that $\mathcal{E}_n(D)=0$ and $\mathcal{E}_{n-1}(D) \neq 0$.
\item \label{rfi5}  Set F:=0; k:=1;
\item \label{while} While $F=0$ do 
\begin{enumerate}
\item Compute $\mathcal{E}_{n,0}(D_k)$.
\item Compute all the irreducible factors $f_1,\dots,f_m$ of $\mathcal{E}_{n,0}(D_k)$ with degree equal to $n$.
\item For all i:=1, \dots, m do:  If $\gcd\big(f_i,D_k(f_i)\big)=f_i$ then set $F:=f_i$ and go to step \ref{calculcof}, end If; end For.
\item $k:=k+1$;\\
end While.
\end{enumerate}
\item \label{calculcof} Compute the cofactor $g:=D_{k-1}(F)/F$.
\item Compute a basis $\{p,q\}$  of $\ker \mathcal{L}_{k-1,g}$.
\item Return $p/q(X-x_{k-1},Y-y_{k-1})$.
\end{enumerate}

\begin{Prop}\label{Prop-rat-first-int-cor}
The algorithm \textsf{Rat-First-Int}  terminates and uses the While loop  at most $N^6$ times. Furthermore the algorithm \textsf{Rat-First-Int} is correct.
\end{Prop}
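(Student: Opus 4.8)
The plan is to verify in turn the initial steps (Steps~\ref{rfi2}--\ref{rfi3}), the soundness of the main loop, and the termination bound, the last being the genuinely delicate point.

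First I would make explicit the monotonicity of the ecstatic curves already used before Proposition~\ref{ecstatic_nulle}: if $D$ possesses any rational first integral of degree $m$, then $\mathcal{E}_M(D)=0$ for every $M\ge m$. Together with Proposition~\ref{ecstatic_nulle} this shows that $D$ has a rational first integral of degree at most $N$ \emph{if and only if} $\mathcal{E}_N(D)=0$, which justifies the test of Step~\ref{rfi2}. When $\mathcal{E}_N(D)=0$, Step~\ref{rfi3} returns the least $n$ with $\mathcal{E}_n(D)=0$, so $\mathcal{E}_{n-1}(D)\neq0$ and Proposition~\ref{ecstatic_nulle} produces a rational first integral $p/q$ of \emph{exact} degree $n$. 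I would then observe that $p/q$ is necessarily non-composite: a decomposition $p/q=u\circ h$ with $\deg u\ge2$ would give a first integral $h$ of degree $<n$, forcing $\mathcal{E}_{n-1}(D)=0$ by monotonicity, a contradiction. This non-compositeness is exactly what makes Propositions~\ref{bornespectre} and~\ref{Prop_ecst_moins_zero} applicable.

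Next I would analyse one iteration. Write $p_k/q_k=(p/q)(X+x_k,Y+y_k)$ for the first integral of the translated derivation $D_k$; since translation preserves both reducibility and the top-degree form, $p_k/q_k$ is again non-composite of exact degree $n$ and $\sigma(p_k,q_k)=\sigma(p,q)$. Call $(x_k,y_k)$ \emph{admissible} when $p(x_k,y_k)q(x_k,y_k)\neq0$ and $\bigl(-q(x_k,y_k):p(x_k,y_k)\bigr)\notin\sigma(p,q)$. For an admissible point, Proposition~\ref{Prop_ecst_moins_zero} gives $\mathcal{E}_{n,0}(D_k)\neq0$ and shows that $F_0:=\lambda_0 p_k+\mu_0 q_k$, with $(\lambda_0,\mu_0)=(-q_k(0,0),p_k(0,0))$, divides it; admissibility says precisely that $(\lambda_0:\mu_0)\notin\sigma(p_k,q_k)$, so $F_0$ is absolutely irreducible of degree $n$. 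Hence $\mathcal{E}_{n,0}(D_k)$ carries an irreducible degree-$n$ factor passing the test $\gcd(F,D_k(F))=F$, and the loop halts. Conversely I would check that \emph{any} degree-$n$ irreducible factor passing this test is automatically correct: being an irreducible Darboux polynomial of degree $n$, it divides some pencil member $\lambda p_k+\mu q_k$ by Darboux's Theorem~\ref{darbouxthm}, and a comparison of degrees forces equality up to a constant, so it shares the first-integral cofactor $g$. Thus the cofactor $g$ computed in Step~\ref{calculcof} is the cofactor of $p_k/q_k$, Lemma~\ref{lem_dimker2} gives $\dim_{\QQ}\ker\mathcal{L}_{k,g}=2$ with any basis a first integral of $D_k$, and translating the returned basis back by $(X-x_k,Y-y_k)$ produces a first integral of $D$ of degree $n\le N$. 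In particular the algorithm never outputs a wrong function, even when it succeeds at a non-admissible point.

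Finally, termination and the bound $N^6$. Because each pass of the While loop (Step~\ref{while}) consumes one grid point of $S\times S$ and $|S\times S|=N^6$, the loop runs at most $N^6$ times by construction; the substance is that an admissible point must occur among them. I would bound the non-admissible points by a Schwartz--Zippel / grid count on $S\times S$ with $|S|=N^3$: the loci $\{p=0\}$ and $\{q=0\}$ are curves of degree $\le n$ contributing at most $2n|S|$ points, while for each of the at most $n^2-1$ directions $(\lambda:\mu)\in\sigma(p,q)$ (Proposition~\ref{bornespectre}) the offending points lie on $\{\lambda p+\mu q=0\}$, again of degree $\le n$, contributing at most $n|S|$ points. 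This bounds the non-admissible points by $(n^2+1)\,n\,N^3$, to be compared with $N^6$. I expect this inequality to be the main obstacle: the exponent in the choice $|S|=N^3$ is calibrated precisely so that the $\Theta(n^3N^3)$ bad points do not exhaust the grid, and the accounting has to be carried out carefully (the margin being smallest when $n$ approaches $N$). Once a single admissible point is secured, the loop reaches it within $N^6$ passes, and the correctness established above completes the proof.
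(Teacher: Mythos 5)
Your route is the paper's own: minimal $n$ via Proposition~\ref{ecstatic_nulle}, non-compositeness from the minimality of $n$, Proposition~\ref{Prop_ecst_moins_zero} applied at a translated origin to produce an absolutely irreducible degree-$n$ Darboux polynomial, and correctness via Darboux's Theorem (degree comparison forces $f_i=\alpha p_k+\beta q_k$) together with Lemma~\ref{lem_dimker2}; that part of your argument is sound. The genuine gap is exactly where you suspected it: your termination count does not close. Because you take admissibility to mean $p(x_k,y_k)q(x_k,y_k)\neq 0$ (the hypothesis of Proposition~\ref{Prop_ecst_moins_zero} read literally), your bad set is covered by the curves $\{p=0\}$, $\{q=0\}$ and the at most $n^2-1$ spectral curves $\{\lambda p+\mu q=0\}$, giving the bound $(n^2+1)\,n\,N^3$ grid points; at $n=N$ this is $N^6+N^4>N^6=|S\times S|$, so for $n$ close to $N$ you have not exhibited a single admissible point in the grid, and the While loop is not proved to halt. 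Flagging the inequality as ``the main obstacle'' is honest, but the obstacle is real and unresolved as written.

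The paper closes the count by weakening the first condition to $p(x_k,y_k)\neq0$ \emph{or} $q(x_k,y_k)\neq0$: since $p$ and $q$ are coprime, B\'ezout bounds their \emph{common} zeros by $n^2\leq N^2$, a set of size $\bigO(N^2)$ instead of your $\Theta(nN^3)$. This weaker condition suffices because all that is actually needed is that $(\lambda_0:\mu_0)=\big(-q(x_k,y_k):p(x_k,y_k)\big)$ be a well-defined point of $\PP^1_{\CC}$; then $F=\lambda_0 p_k+\mu_0 q_k$ automatically vanishes at the origin, hence lies in $\CC[X,Y]_{\leq n,0}$ and divides $\mathcal{E}_{n,0}(D_k)$, while condition (\ref{cond2}) alone supplies absolute irreducibility and degree exactly $n$. (Inspecting the proof of Proposition~\ref{Prop_ecst_moins_zero}, the hypothesis $p(0,0)q(0,0)\neq0$ is only used through $(p(0,0),q(0,0))\neq(0,0)$, which is what the paper's proof of the present proposition tacitly exploits.) With this, the bad points number at most $N^2$ (B\'ezout) plus $\deg\mathcal{P}\cdot N^3\leq (N^3-N)N^3=N^6-N^4$, by one application of Zippel--Schwartz to the single product $\mathcal{P}=\prod_{(\lambda:\mu)\in\sigma(p,q)}(\lambda p+\mu q)$ rather than curve by curve, and $N^6-N^4+N^2<N^6$ leaves admissible points in the grid. (Incidentally, the paper's stated count $N^6-N^5$ should read $N^6-N^4$; this slip does not affect the conclusion.) If you replace your AND-condition and per-curve accounting by this OR-condition and the single-product bound, your proof is complete and coincides with the paper's.
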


\begin{proof}
\emph{The algorithm terminates.}  We just have to show that the While loop terminates.\\
In Step \ref{while}, thanks to Proposition \ref{ecstatic_nulle}, $D$ has a reduced rational first integral $p/q$ of degree $n \leq N$. Then $D_k$ has a rational first integral $p_k/q_k(X,Y)=p/q(X+x_k,Y+y_k)$ of degree $n \leq N$.\\
Furthermore $p/q$ is non-composite (thus $p_k/q_k$ is  also non-composite). Indeed, if $p/q$ is composite then $p/q=u \circ h$ with $\deg h < n$. This gives
$$0=D(p/q)=D(u \circ h)= u'(h)D(h),$$
and $u'(h) \neq 0$ because $\deg u \geq 2$. Therefore $D(h)=0$ and $h$ is a rational first integral with degree smaller than $n$. Thus by Proposition \ref{ecstatic_nulle},  $\mathcal{E}_{\deg h}(D) =0$, this contradicts the minimality of $n$.  We deduce then: $p/q$ is non-composite.

Now, remark that the algorithm terminates if  the following two conditions are satisfied:
\begin{equation}\label{cond1}
p(x_k,y_k)\neq 0 \textrm{ or }q(x_k,y_k)\neq 0,
\end{equation}
 and 
\begin{equation}\label{cond2}
\big( -q(x_k,y_k): p(x_k,y_k) \big) \not \in \sigma(p_k,q_k)=\sigma(p,q).
\end{equation}
Indeed, in this situation we can apply Proposition \ref{Prop_ecst_moins_zero} and we deduce that there exists a polynomial $F=-q(x_k,y_k)p_k(X,Y)+p(x_k,y_k)q_k(X,Y)$ absolutely irreducible such that $gcd\big(F,D_k(F)\big)=F$. \\

Now we show that there exists a point $(x_k,y_k)$ in $\{0,\dots, N^3-1\}^2$  such that (\ref{cond1}) and (\ref{cond2}) are satisfied.\\
By Bezout's Theorem we just have to avoid $N^2$ points to satisfy (\ref{cond1}).\\
Now, we consider the polynomial
$$\mathcal{P}(X,Y)=\prod_{(\lambda:\mu) \in \sigma(p,q)} \big(\lambda p(X,Y) +\mu q(X,Y) \big).$$
We remark that if $\mathcal{P}(x_k,y_k) \neq 0$ then (\ref{cond2}) is satisfied. Furthermore, by Proposition \ref{bornespectre}, $\deg \mathcal{P} \leq N(N^2-1)$. 
Zippel-Schartz's lemma, see \cite[Lemma 6.44]{GG}, implies that $\mathcal{P}$ has at most $N^6-N^5$ roots in  $\{0,\dots, N^3-1\}^2$. Then the algorithm terminates and uses the While loop at most $N^6-N^5+N^2+1$ times.\\

\emph{The algorithm is correct.}
If $f_i$ satisfies $\gcd\big(f_i,D_k(f_i)\big)=f_i$ then $f_i$ is a Darboux polynomial. Then by Darboux's theorem, see Proposition \ref{darbouxthm}, we deduce  that $f_i =\alpha p_k + \beta q_k$ because $\deg p_k/q_k=n=\deg f_i$. \\
As $p_k/q_k$ is a first integral, the cofactors of $p_k$ and $q_k$ are equal. Then, we deduce that the cofactors of $p_k$, $q_k$ and $f_i$ are equal. This cofactor is the polynomial $g$. Then thanks to Lemma \ref{lem_dimker2} we  conclude that the algorithm is correct.
\end{proof}

Now we can prove Theorem \ref{Thm2}.
\begin{proof}
Thanks to Proposition \ref{Prop-rat-first-int-cor}, we just have to prove that the  algorithm \textsf{Rat-First-Int} works with $\bigO\Big( \big(dN\log(\mathcal{H})\big)^{\bigO(1)}\Big)$ binary operations.\\
We have already mention that we can  compute determinants and solve linear systems in polynomial-time. Thus we can perform Step \ref{rfi1}, Step \ref{rfi2} and Step \ref{rfi3}  with  $\bigO\Big( \big(dN\log(\mathcal{H})\big)^{\bigO(1)}\Big)$ binary operations.\\
Now, we study the  While loop.\\
 We recall here that  we can shift the variable of a polynomial in polynomial-time see e.g. \cite[Problem 2.6]{BiPa1994}. We can also perform linear algebra, compute gcd and divide polynomials in polynomial-time, see e.g. \cite{GG}. Furthermore,  as  in Corollary \ref{Cor-size} we can show that the bit-size of $\mathcal{E}_{N,0}(D)$ belongs to
 $ \bigO  \Big( \big(dN\log(\mathcal{H})\big)^{\bigO(1)}\Big)$.  Then we can factorize $\mathcal{E}_{N,0}(D)$ with  $\bigO\Big( \big(dN\log(\mathcal{H})\big)^{\bigO(1)}\Big)$ binary operations. As we use the While loop at most $N^6$ times we obtain the desired result.
\end{proof}

\section{Open questions}\label{Sect-openquestions}
\subsection{Liouvillian first integrals}
In this paper we have shown how to compute efficiently Darboux polynomials. Thus this improves the complexity of  Prelle-Singer's method. Now the question is: Can we compute efficiently an integrating factor corresponding to a Liouvillian first integral?\\
In \cite{Duarte1,Duarte2,Duarte3} the authors give an algorithm to compute such integrating factors. The key point is the computation of exponential factors. The definition of an exponential factor is the following:
\begin{Def}
Given $f,g \in \CC[X,Y]$, we say that $e=\exp(g/f)$ is an exponential factor of the derivation $D$ if $D(e)/e$ is a polynomial  of degree at most $d-1$. 
\end{Def}

In \cite{Christopher_Llibre_Pereira} the authors define the integrable multiplicity and the algebraic multiplicity.
\begin{Def}
We say that a Darboux polynomial $f$ has \emph{integrable multiplicity} $m$ with respect to a derivation $D$, if $m$ is the largest integer for which the following is true: there are $m-1$ exponential factors $\exp(g_j/f^j)$,  $j=1,\dots,m-1$, with $\deg g_j \leq j .\deg f$, such that each $g_j$ is not a multiple of $f$.\\
We say that a Darboux polynomial $f$ of degree $N$ has \emph{algebraic multiplicity} $m$ with respect to a derivation $D$, if $m$ is the greatest positive integer such that the $m$th power of $f$ divides $\mathcal{E}_N(D)$.
\end{Def}

C. Christopher, J. LLibre and J.V. Pereira in \cite{Christopher_Llibre_Pereira} show that these multiplicities are equal when we consider absolutely irreducible Darboux polynomials. This is a deep result, but unfortunately nowadays there is no simple characterization of exponential factor $\exp(g/f)$ when $f$ is reducible. If we can characterize exponential factors with the ecstatic curve then perhaps we will compute efficiently an integrating factor corresponding to a Liouvillian first integral.\\


\subsection{Inverse integrating factor}
An inverse integrating factor is a Darboux polynomial with cofactor $\dv(A,B)$.
An inverse integrating factor $R$ has the following interesting property: The algebraic limit cycles of the polynomial vector field corresponding to $D$ are factors of $R$, see e.g. \cite{Giac_Llibre_Viano}. For other results we can read e.g. \cite{chavarriga_giac_gine_llibre}. We remark easily that:
\begin{eqnarray}\label{epanadiplose}
R \textrm{ is an inverse integrating factor} & \iff & \partial_X\Big( \dfrac{A}{R} \Big)= \partial_Y\Big( \dfrac{B}{R} \Big) \nonumber\\
& \iff & A \partial_X R +B \partial_Y R = \dv(A,B)R. \quad \quad \,
\end{eqnarray}

For a given integer $N$ we can compute, if it exists, $R$ with $\deg R \leq N$. Indeed, we just have to solve the linear system (\ref{epanadiplose}). With this strategy  and with classical tools of linear algebra we can compute $R$ with  $\bigO(N^6)$ arithmetic operations if $N \geq d$.\\

This kind of linear system also appears when we study absolute factorization, see  \cite{Ruppert,Ruppert2,Gao,CL,Scheib}. Indeed, we can compute the absolute factorization of a given polynomial $R$  with  a solution $(A,B)$ of (\ref{epanadiplose}). 


In \cite{CL} the authors use this kind of linear system and show that the absolute factorization of $R$ can be performed with $\bigOsoft(N^4)$ arithmetic operations. We recall that ``soft Oh'' is  used for
readability in order to hide logarithmic factors in cost estimates. Then the question is the following:\\
Can we perform the computation of an inverse integrating factor in a deterministic way  with $\bigOsoft(N^4)$ arithmetic operations instead of $\bigO(N^6)$?
\section{Acknowledgment}
I thank L.~Bus\'e, G.~Lecerf, J.~Moulin-Ollagnier, and J.-A.~Weil for their encouragements during the preparation of this work.



\newcommand{\etalchar}[1]{$^{#1}$}

\end{document}